\newtheorem{theorem}{Theorem}[section]
\newtheorem{lemma}[theorem]{Lemma}
\newtheorem{corollary}[theorem]{Corollary}
\newtheorem{proposition}[theorem]{Proposition}
\newtheorem{definition}[theorem]{Definition}
\newtheorem{example}[theorem]{Example}
\newtheorem{problem}{Problem}[section]
\newtheorem{remark}{Remark}
\begin{document}
	
	\title[]{Finding singularly cospectral graphs}
	
	\author[C.M. Conde]{Cristian M. Conde${}^{1,3}$}
	\author[E. Dratman]{Ezequiel Dratman${}^{1,2}$}
	\author[L.N. Grippo]{Luciano N. Grippo${}^{1,2}$}
	
	\address{${}^{1}$Instituto de Ciencias\\Universidad Nacional de General Sarmiento}
	\address{${}^{2}$Consejo Nacional de Investigaciones Cient\'ificas y Tecnicas, Argentina}
	\address{${}^{3}$Instituto Argentina de Matem\'atica "Alberto Calder\'on" -  Consejo Nacional de Investigaciones Cient\'ificas y Tecnicas, Argentina}
	
	\email{cconde@campus.ungs.edu.ar}
	\email{edratman@campus.ungs.edu.ar}
	\email{lgrippo@campus.ungs.edu.ar}

	\keywords{Almost cospectral graphs, cospectral graphs, singularly cospectral graphs}
	\subjclass[2010]{05 C50}
	
	\date{}
	
	\maketitle

\begin{abstract}
	Two graphs having the same spectrum are said to be cospectral. A pair of singularly cospectral graphs is formed by two graphs such that the absolute values of their nonzero eigenvalues coincide. Clearly, a pair of cospectral graphs is also singularly cospectral but the converse may not be true. Two graphs are almost cospectral if their nonzero eigenvalues and their multiplicities coincide. In this paper, we present necessary and sufficient conditions for a pair of graphs to be singularly cospectral, giving an answer to a problem posted by Nikiforov. In addition, we construct an infinite family of pairs of noncospectral singularly cospectral graphs with unbounded number of vertices.  It is clear that almost cospectral graphs are also singularly cospectral but the converse is not necessarily true, we present families of graphs where both concepts: almost cospectrality and singularly cospectrality agree.
\end{abstract}

\section{Introduction}\label{s1}

Two graphs are said to be cospectral if they have the same spectrum. The problem of finding families of pairs of nonisomorphic cospectral graphs have attracted the attention of many researchers. Probably, the first relevant result on this subject is due to Schwenk, who proves that almost all trees are cospectral~\cite{schwenk1973}. Since then, many articles, presenting constructions to either generate pairs of cospectral graphs or finding families of graphs which do not have a mate, known as defined by their spectrum, have been published. See for instance~\cite{GodsilM82,vanDamH09,Quietal20}. The energy of a graph was defined by Gutman in 1978 as the sum of the absolute values of its eigenvalues, counted with their multiplicity~\cite{Gutman78}. Two graphs, with the same number of vertices, are said to be equienergetic if they have the same energy. Clearly, two cospectral graphs are also equienergetic. Nevertheless, there are examples of pairs of noncospectral equienergetic graphs~\cite{Stevanovic05,BonifacioVA08}. Indeed, finding noncospectral equienergetic pairs of graphs is a very active and actual topic of research. The energy of a graph is nothing but the trace norm of its adjacency matrix; i.e., the sum of the singular values of its adjacency matrix. Notice that, when a matrix is symmetric, its singular values are precisely the absolute values of its eigenvalues. In~\cite{Nikiforov2016}, Nikiforov defines two graphs as singularly cospectral if their nonzero singular values, counted with their multiplicity, coincide. Hence any two singularly cospectral graphs are equienergetic. In that article, he also posted the following problem.

\begin{problem}~\cite{Nikiforov2016}\label{Niki}
	Find necessary and sufficient conditions for two graphs to be singularly cospectral.
\end{problem}

We give a characterization in Section~\ref{sec: main result} that answers the Problem~\ref{Niki}. In addition, we formulate the following two natural problems, in connection with the notion of singularly cospectral.

\begin{problem}~\label{problem 1}
	Find infinite pairs of noncospectral singularly cospectral graphs.
\end{problem}

%Another problem in connection with the above problem is the following. 

\begin{problem}~\label{problem 2}
	Find families of graphs where singularly cospectrality implies cospecrality.
\end{problem}

In Section~\ref{sec: construction sc noncospectral} we deal with Problem~\ref{problem 1} by presenting a construction based on the spectral decomposition of a symmetric matrix. Finally, in Section~\ref{sec: c implies sc}, we define families of graphs by imposing certain constrains on the spectrum of their graphs that makes equivalent the notions of cospectrality and singularly cospectrality, answering the Problem~\ref{problem 2}.  

%The spectrum of a graph is the spectrum of its adjacency matrix. Cospectral graphs are those graphs having the same spectrum. Such subject have been studied since very beginnings of the development of the theory of graph spectra.

\section{Preliminaries}\label{s2}

All graphs, mentioned in this article are finite, have no loops and multiple edges. Let $G$ be a graph. We use $V(G)$ and $E(G)$ to denote the set of $n=|V(G)|$ vertices and the set of $m=|E(G)|$ edges of $G$, respectively. 
The adjacency matrix associated with the graph $G$ is defined by
$A_G= (a_{ij})_{n\times n}$, where $a_{ij} = 1$; if $v_i \sim  v_j$  i.e., the vertex $v_i$ is adjacent to the vertex
$v_j$ and $a_{ij} = 0$ otherwise.  We often denote $A_G$ simply by $A.$ We use $d_G(v)$ to denote the degree of $v$ in $G$ (the number of edges incident to $v$), or $d(v)$ provided the context is clear.  A $d$-regular graph is a graph such that $d_G(v)=d$ for every $v\in V(G)$.

For a matrix $M \in \mathbb{R}^{n\times n}$, the
spectrum,  $\sigma(M)$ is the
multiset $
\sigma(M)=\{[\lambda_1(M)]^{m_1},\cdots, [\lambda_{r}(M)]^{m_{r}}\},
$ where $\lambda_1(M)>\lambda_2(M)>\cdots >\lambda_r(M)$ are the distinct eigenvalues of $M$ and $m_i$
is the multiplicity
of $\lambda_i(M)$ for any $i \in \{1,\cdots, r\}$ and spectral radius, $\rho(M)$  is the
largest absolute value of the eigenvalues i.e., largest of $|\lambda_1(M)|, |\lambda_2(M)|, \cdots, |\lambda_r(M)|$ where $\lambda_i(M)\in \sigma(M)$. 
%Being
%a real symmetric matrix $A_G$ has $n$ real eigenvalues, say, $\lambda_1(G), \lambda_2(G), \cdots, \lambda_n(G)$;
%with $\lambda_i(G)\geq \lambda_{i+1}(G)$; for $i=1, 2, \cdots, n-1$. 
To denote the spectrum of $A_G$ we use $\sigma(G)$. 

%It is well known, by Perron-Frobenius Theorem,  that  $\lambda_1(G)>0$
%is the spectral radius of the adjacency matrix $A_G$, i.e. $\lambda_1(G)=\rho(A_G).$ 

Next proposition collects several results about the eigenvalues of a graph. Most of these statements are well-known.

\begin{proposition}{\cite[Chapter 6]{Ba}}\label{propiedades_basicas}
	Let $G$ be a  graph with $n$ vertices and  $m$ edges. Then, 
	\begin{enumerate}
		\item $|\lambda_i(G)|\leq \lambda_1(G)$ for all $i\geq 2.$
		%\item $\lambda_1(G) \leq \Delta.$ Equality holds if and only $G$ has a $\Delta$-regular component. 
		\item  $\lambda_1(G) \geq \frac{2m}{n}.$ Equality holds if and only $G$ is a $\frac{2m}{n}$-regular.
		\item If $G$ is connected, the equality $\lambda_r(G)=-\lambda_1(G)$ holds if and only $G$ is bipartite. %Moreover, the spectrum of a bipartite graph is symmetric. 
		\item If $G$ is bipartite, then $\sigma(G)$ is symmetric respect to zero.
	\end{enumerate}
\end{proposition}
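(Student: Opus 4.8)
The plan is to treat the four items using the Rayleigh quotient and the Perron--Frobenius theorem as the two main tools, exploiting that $A=A_G$ is a symmetric nonnegative matrix which, when $G$ is connected, is irreducible.

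For item (1), I would use that for a symmetric matrix $\lambda_1(G)=\max_{x\neq 0}\frac{x^{T}Ax}{x^{T}x}$. Given an eigenvalue $\lambda$ with unit eigenvector $v$, let $w$ denote the vector of entrywise absolute values of $v$. Then $|\lambda|=|v^{T}Av|=\bigl|\sum_{i,j}a_{ij}v_iv_j\bigr|\le\sum_{i,j}a_{ij}|v_i||v_j|=w^{T}Aw\le\lambda_1(G)$, where the first inequality uses $a_{ij}\ge 0$ and the last is the Rayleigh bound applied to the unit vector $w$. This gives $|\lambda_i(G)|\le\lambda_1(G)$ for every $i$. For item (2), I would substitute the all-ones vector $\mathbf{1}$ into the Rayleigh quotient: since $\mathbf{1}^{T}A\mathbf{1}=\sum_{i,j}a_{ij}=2m$ and $\mathbf{1}^{T}\mathbf{1}=n$, we obtain $\lambda_1(G)\ge\frac{2m}{n}$. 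In the equality case, equality forces $\mathbf{1}$ to be a maximizer of the Rayleigh quotient, hence an eigenvector for $\lambda_1(G)$; then $A\mathbf{1}=\lambda_1(G)\mathbf{1}$ says every vertex has degree $\lambda_1(G)=\frac{2m}{n}$, so $G$ is regular. Conversely, if $G$ is $\frac{2m}{n}$-regular then $\mathbf{1}$ is an eigenvector and item (1) identifies its eigenvalue $\frac{2m}{n}$ as $\lambda_1(G)$.

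For items (3) and (4), the key observation is a sign-conjugation trick. If $G$ is bipartite with parts $X$ and $Y$, let $D$ be the diagonal $\pm 1$ matrix equal to $+1$ on $X$ and $-1$ on $Y$; then $DAD=-A$, because every edge joins the two parts and therefore each off-diagonal entry changes sign. Hence $A$ and $-A$ are similar, so $\sigma(G)=-\sigma(G)$ with multiplicities, which is exactly item (4), and it also yields $-\lambda_1(G)\in\sigma(G)$, forcing $\lambda_r(G)=-\lambda_1(G)$. This settles item (4) and one direction of item (3).

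The hard part, and the step where Perron--Frobenius is indispensable, is the converse in item (3): from $\lambda_r(G)=-\lambda_1(G)$ together with connectedness, deduce bipartiteness. Here I would take a unit eigenvector $u$ for $\lambda_r(G)$ and rerun the inequality chain of item (1), obtaining $\lambda_1(G)=|\lambda_r(G)|=|u^{T}Au|\le|u|^{T}A|u|\le\lambda_1(G)$, which must be an equality throughout. The outer equality makes $|u|$ a Perron eigenvector, so by irreducibility $|u|>0$ and $u$ has no zero entries; the inner equality $\bigl|\sum_{i,j}a_{ij}u_iu_j\bigr|=\sum_{i,j}a_{ij}|u_i||u_j|$ forces every term $a_{ij}u_iu_j$ to share the sign of $u^{T}Au=\lambda_r(G)<0$, so $u_iu_j<0$ on every edge. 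Splitting the vertices according to the sign of $u_i$ then exhibits a bipartition. The delicate points to get right are the equality analysis of the two inequalities and the appeal to simplicity and positivity of the Perron eigenvector, which is precisely where connectedness enters.
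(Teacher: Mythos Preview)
The paper does not prove this proposition; it is stated with a citation to \cite[Chapter~6]{Ba} and used as background. Your arguments are the standard textbook ones (Rayleigh quotient for (1)--(2), the sign matrix $D$ with $DAD=-A$ for (4) and one direction of (3), and the Perron--Frobenius equality analysis for the converse of (3)), and they are essentially correct.

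One small point to tighten: in the converse of (2) you write that ``item (1) identifies its eigenvalue $\tfrac{2m}{n}$ as $\lambda_1(G)$.'' Item (1) only tells you that $\lambda_1(G)$ is the spectral radius; knowing that $d=\tfrac{2m}{n}$ is \emph{an} eigenvalue does not by itself force $d=\lambda_1(G)$. You need one extra line, e.g.\ Gershgorin (each row sum is $d$, so every eigenvalue lies in $[-d,d]$, hence $\lambda_1(G)\le d$), or the Perron--Frobenius fact that a nonnegative matrix with a positive eigenvector for some eigenvalue must have that eigenvalue equal to the spectral radius. With that addition the converse of (2) is complete. The rest of your write-up, including the equality analysis in (3) and the use of connectedness to ensure the Perron eigenvector is strictly positive, is clean.
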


The inertia of a graph $G$ is the triple $In(G) = (p(G), z(G), n(G)),$ in which $p(G), z(G), n(G)$ stand for the number of positive, zero,
and negative eigenvalues of $G$, respectively. The energy of $G$,
denoted by $\mathcal{E} = \mathcal{E}(G)$, is the sum of the absolute values of the eigenvalues of $G$. 

%{ \bf Lo usamos????If $A$ is the adjacency matrix of a graph $G$, then $tr(A^i)$ gives the total number of closed walks of length $i.$ }
%For the sake of completeness, we recall the definitions: 
Let $G$ and $H$ be two graphs with vertex sets $V(G)$ and $V(H)$, respectively. The \emph{strong product of $G$ and $H$} is the graph $G\boxtimes H$ such that the vertex set is $V(G)\times V(H)$ and in which two vertices $(g_1, h_1)$ and $(g_2, h_2)$ are adjacent if and only if one of the following conditions holds: $g_1=g_2$ and $h_1$ is adjacent to $h_2$, $h_1=h_2$ and $g_1$ is adjacent to $g_2$, or $g_1$ is adjacent to $g_2$ and $h_1$ is adjacent to $h_2$. The \emph{cartesian product of $G$ and $H$} is the graph $G\square H$ such that the vertex set is $V(G)\times V(H)$ and in which two vertices $(g_1, h_1)$ and $(g_2, h_2)$ are adjacent if and only if either $g_1 = g_2$ and $h_1$ is adjacent to $h_2$ in $H$ or $g_1$ is adjacent to $g_2$ in $G$ and $h_1 = h_2.$

We denote by $C_n$, $P_n$ and $K_n$ to the cycle, the path and the complete graph on $n$ vertices. By $K_{r,s}$ we denote the complete bipartite graphs with a bipartition on $r$ and $s$ vertices respectively.  We use $P_G(x)$ to denote the characteristic polynomial of $A_G$; i.e., $P_G(x)=\det(xI_n-A_G)$. It is easy to prove that $P_{K_n}(x)=(x-n+1)(x+1)^{n-1}$ and in consequence $\sigma(K_n)=\{[n-1]^1,[-1]^{n-1}\}$.

\subsection*{Schatten norms}

%This section is dedicated to the Schatten norms of matrices and in particular of graphs.

Consider a matrix $M \in \mathbb{R}^{n\times n}$. 
Recall that the {\textit singular values} of a matrix $M$ are the square roots of the eigenvalues of $M^*M$, where $M^*$ is the conjugate transpose of $M$. We denote by  $s_1(M), s_2(M),\cdots, s_r(M)$ for the singular values of $M$ arranged in descending order.  Let $r={\rm rank}(M)$ then  $s_{r+1}(M) =\cdots=s_n(M) = 0.$

Define for $p>0$, 
$$
\|M\|_p=\left(\sum_{i=1}^n s_i(M)^p \right)^{\frac 1p}.
$$
For $p\geq 1$, it is  a norm over ${\mathbb R}^{n\times n}$ called the {\textit{ $p$-Schatten norm}}. When $p=1$, it is also called the trace norm or nuclear norm. When $p= 2$, it is exactly the Frobenius norm $\|M\|_2.$

\begin{remark}
	As $M^*M$ is a positive semi-definite matrix, so the eigenvalues of  $(M^*M)^{1/2}$ coincide with the  singular values of $M$. It holds that 
	\begin{equation}\label{limitenormap}
	\lim\limits_{p\to \infty} \|M\|_p=s_1(M).\end{equation}
\end{remark}

%\section{Main results}~\label{sec: main result}
\section{Characterizing singularly cospectral graphs}~\label{sec: main result}

Given a graph $G$, following Nikiforov's notation~\cite{Nikiforov2016}, consider  the function $f_G (p)$ for any $p\geq 1$ as:
$$
f_G(p):=\|G\|_p=\left(\sum_{i=1}^n s_i(G)^p \right)^{\frac 1p},
$$
where $\|G\|_p$ and $s_i(G)$ stands for $\|A_G\|_p$ and $s_i(A_G)$, respectively. The following statement  collects some of the most important known properties of $f_G$.

%\begin{lemma}\label{propbasicas}[\cite{Nikiforov2016},  Proposition 4.4 y otras ] \label{Nikiforov} 
\begin{lemma}\label{propbasicas}~\cite{Nikiforov2016} \label{Nikiforov}
	The following statements hold for a graph $G$.
	\begin{enumerate}
		\item $f_G(p)$ is differentiable in $p$.
		\item  $f_G(p)$ is decreasing in $p.$
		%\item $\lim\limits_{p\to +\infty} f_G(p)=\lambda_1(G),$ where $\lambda_1(G)$ denotes the largest eigenvalue of $G$. 
		%{\bf Este item coincide con la propiedad del limite de la norma p!!!!!!}
		\item  
		If $G$ is a graph with $m$ edges, then $f_G(2)=\sqrt{2m}.$ Furthermore, for any $k > 1$, the number of closed walks of length $2k$ of a graph $G$ is equal to 
		$$\frac{\left(f_G (2k)\right)^{2k}}{4k}.$$
	\end{enumerate}
	
\end{lemma}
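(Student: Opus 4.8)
The plan is to reduce all three statements to elementary facts about finite $\ell^p$-norms, using the single structural observation that $A_G$ is real and symmetric, so its singular values are exactly the absolute values of its eigenvalues, $s_i(G)=|\lambda_i(G)|$. Write $s_1\ge s_2\ge\cdots\ge s_r>0$ for the nonzero singular values, so that $f_G(p)=\left(\sum_{i=1}^{r}s_i^{p}\right)^{1/p}$ (the zero singular values contribute nothing for $p>0$); both (1) and (2) then become assertions about the map $p\mapsto\|(s_1,\dots,s_r)\|_p$ on a fixed nonnegative vector. For (1) I would write $f_G(p)=\exp\!\left(\tfrac1p\ln\!\left(\sum_{i=1}^{r}s_i^{p}\right)\right)$. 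Each summand $s_i^{p}=e^{p\ln s_i}$ is smooth in $p$, the finite sum $\sum_i s_i^{p}$ is strictly positive whenever $G$ has at least one edge (if $G$ has no edges then $f_G\equiv 0$ is trivially differentiable), so $\ln\!\left(\sum_i s_i^{p}\right)$ is smooth; dividing by $p$ and composing with $\exp$ preserves smoothness on $p\ge 1$, giving differentiability (in fact $C^\infty$).

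For (2) I would prove the monotonicity of $\ell^p$-norms directly. Set $M=s_1=\max_i s_i$ and fix $1\le p<q$. Since $s_i/M\in[0,1]$ we have $(s_i/M)^{q}\le (s_i/M)^{p}$, hence $\sum_i (s_i/M)^{q}\le\sum_i (s_i/M)^{p}$; as the index with $s_i=M$ contributes $1$, the right-hand side is at least $1$, so raising to the power $1/q\le 1/p$ only decreases it. This yields $\left(\sum_i (s_i/M)^{q}\right)^{1/q}\le\left(\sum_i (s_i/M)^{p}\right)^{1/q}\le\left(\sum_i (s_i/M)^{p}\right)^{1/p}$, and multiplying through by $M$ gives $f_G(q)\le f_G(p)$, so $f_G$ is (non-)increasing, with strict decrease unless all nonzero $s_i$ coincide.

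For (3) everything comes from the trace interpretation of even power sums. Because $2k$ is even and $s_i=|\lambda_i(G)|$, one has $\bigl(f_G(2k)\bigr)^{2k}=\sum_i s_i^{2k}=\sum_i \lambda_i(G)^{2k}=\operatorname{tr}\!\left(A_G^{2k}\right)$. Taking $k=1$ gives $f_G(2)^{2}=\operatorname{tr}\!\left(A_G^{2}\right)=\sum_{i,j}a_{ij}a_{ji}=\sum_{i,j}a_{ij}^{2}=2m$, so $f_G(2)=\sqrt{2m}$. For $k>1$, the diagonal entry $\bigl(A_G^{2k}\bigr)_{ii}$ is the number of closed walks of length $2k$ based at $v_i$, whence $\operatorname{tr}\!\left(A_G^{2k}\right)=\bigl(f_G(2k)\bigr)^{2k}$ equals the total number of such rooted, oriented closed walks; identifying walks that differ by one of the $2k$ cyclic rotations or by the reversal of orientation accounts for the factor $4k$ in the stated count.

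The content here is classical, so I expect no deep difficulty; the delicate points are the boundary cases in (1) (the edgeless graph, and ensuring $\sum_i s_i^{p}>0$ so the logarithm is defined), the careful bookkeeping of the two distinct exponents $1/p$ and $1/q$ in (2), and—most subtly—pinning down the normalization in the closed-walk count of (3). The bare trace $\operatorname{tr}\!\left(A_G^{2k}\right)$ counts rooted, oriented closed walks, and the factor $4k$ encodes the convention by which walks agreeing up to rotation or reflection are identified; this is also why the case $k=1$ (length-two walks, which carry an intrinsic reflection symmetry) is excluded from the statement.
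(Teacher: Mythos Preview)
The paper does not prove this lemma at all; it is stated with a citation to Nikiforov and used as a black box. So there is no ``paper's own proof'' to compare against, and the question is simply whether your argument stands on its own.

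Parts (1) and (2) are fine: the smoothness argument via $f_G(p)=\exp\bigl(\tfrac1p\ln\sum_i s_i^p\bigr)$ and the standard monotonicity proof for finite $\ell^p$-norms are both correct and complete. The first half of (3), namely $f_G(2)=\sqrt{2m}$ via $\operatorname{tr}(A_G^2)=2m$, is also correct, as is the identity $\bigl(f_G(2k)\bigr)^{2k}=\operatorname{tr}(A_G^{2k})$.

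The gap is in your justification of the factor $4k$. You assert that identifying rooted oriented closed walks up to cyclic rotation and reversal accounts for division by $4k$, but the dihedral group of order $4k$ does \emph{not} act freely on closed walks: a walk with period $d\mid 2k$ has a rotation stabiliser of size $2k/d$, and palindromic walks are fixed by the reversal. Concretely, take $G=K_2$: then $\operatorname{tr}(A_G^{4})=2$, yet $2/(4\cdot 2)=1/4$ is not an integer, so the formula as written cannot be counting any class of walks in the way you describe. Your parenthetical about $k=1$ being excluded because of ``intrinsic reflection symmetry'' does not rescue the argument for $k>1$, since longer walks can equally well carry such symmetries. Either the intended statement in the source carries a different normalisation (the version actually used later in the paper only needs that $f_G(2k)=f_H(2k)$ iff $G$ and $H$ have the same number of closed walks of length $2k$, for which $\operatorname{tr}(A^{2k})$ suffices and no $4k$ is needed), or ``closed walk'' is being used in a nonstandard sense that you would have to pin down precisely before the division by $4k$ can be defended.
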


Following \cite{Nikiforov2016} we recall the next definition. 

\begin{definition}Two graphs $G$ and $H$ are called {\bf singularly cospectral} if they have the same nonzero
	singular values with the same multiplicities. We denote a pair of noncospectral singularly cospectral graphs by \textbf{NCSC}.
\end{definition}

We use ${\rm rank}(G)$ and ${\rm nullity}(G)$ to denote ${\rm rank}(A_G)$ and ${\rm nullity}(A_G)$ for a graph $G$, respectively. Let us start stating some  relevant  properties satisfied by singularly cospectral graphs.

\begin{lemma}\label{prop_nec}
	Let $G$ and $H$ be singularly cospectral graphs. Then, the following condition holds. 
	\begin{enumerate}
		\item $G$ and $H$ have the same number of edges.
		\item ${\rm rank}(G)={\rm rank}(H),$ $\big |{\rm nullity}(G)-{\rm nullity}(H)\big|=\big ||V(G)|-|V(H)|\big |$ and $p(G)-p(H)=n(H)-n(G)$.		
		\item $\mathcal{E}(G)=\mathcal{E}(H)$, i.e, $G$ and $H$ are equienergetic.% {decimos es decir son coenergeticos?}
		
	\end{enumerate}
\end{lemma}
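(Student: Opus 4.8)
The plan is to exploit the single structural fact that makes everything work: since each adjacency matrix $A_G$ is real and symmetric, its singular values are exactly the absolute values of its eigenvalues. Thus the hypothesis that $G$ and $H$ are singularly cospectral translates into the statement that the multisets $\{|\lambda| : \lambda \in \sigma(G),\ \lambda \neq 0\}$ and $\{|\mu| : \mu \in \sigma(H),\ \mu \neq 0\}$ coincide with multiplicity. Every item below is then a short consequence of comparing an appropriate symmetric function of the singular values.

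For item (1), I would use that $\|A_G\|_2^2 = \sum_i s_i(G)^2 = \mathrm{trace}(A_G^2) = 2m$, which is precisely $f_G(2)^2 = 2m$ from Lemma~\ref{Nikiforov}(3). Since the nonzero singular values of $G$ and $H$ agree with multiplicity (and the zero ones contribute nothing to the sum), we get $\sum_i s_i(G)^2 = \sum_i s_i(H)^2$, so $f_G(2) = f_H(2)$ and hence $2m_G = 2m_H$.

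For item (2), the rank of $A_G$ equals the number of its nonzero singular values counted with multiplicity; since these multisets are identical for $G$ and $H$, we obtain $\mathrm{rank}(G) = \mathrm{rank}(H)$ at once. Writing $\mathrm{nullity}(G) = |V(G)| - \mathrm{rank}(G)$ and likewise for $H$, and subtracting, yields $\mathrm{nullity}(G) - \mathrm{nullity}(H) = |V(G)| - |V(H)|$; taking absolute values gives the stated identity. For the inertia relation, note that the nonzero eigenvalues are exactly those eigenvalues whose absolute value is a nonzero singular value, so their number is $p(G) + n(G) = \mathrm{rank}(G)$; equality of ranks then gives $p(G) + n(G) = p(H) + n(H)$, which rearranges to $p(G) - p(H) = n(H) - n(G)$.

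Finally, item (3) follows because the energy is $\mathcal{E}(G) = \sum_i |\lambda_i(G)| = \sum_i s_i(G) = f_G(1)$, the trace (nuclear) norm; agreement of the nonzero singular values with multiplicity gives $\sum_i s_i(G) = \sum_i s_i(H)$ and hence $\mathcal{E}(G) = \mathcal{E}(H)$. I do not expect a genuine obstacle here: the whole argument is bookkeeping built on the singular-value/eigenvalue dictionary for symmetric matrices. The only point demanding a little care is the inertia identity in (2), where one must track multiplicities and remember that ``nonzero eigenvalue'' and ``nonzero singular value'' index the same count, namely the rank.
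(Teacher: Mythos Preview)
Your proposal is correct and follows essentially the same approach as the paper: both arguments use $f_G(2)=\sqrt{2m}$ for item~(1), identify the rank with the number of nonzero singular values and invoke rank--nullity for item~(2), and read off item~(3) from the fact that singular values of a symmetric matrix are absolute values of eigenvalues. The only cosmetic difference is that you phrase everything through the singular-value/eigenvalue dictionary up front, whereas the paper recalls the $A^TA$ vs.\ $A$ rank argument explicitly; the substance is identical.
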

\begin{proof}
	We denoted by $m_G$ and $m_H$ the number of edges of $G$ and $H$, respectively.  By item (4) in Lemma \ref{propbasicas} we have that $\sqrt{2m_G}=f_G(2)=f_H(2)=\sqrt{2m_H}.$ Then, $m_G=m_H.$
	
	Recall that if  $A \in \mathbb{R}^{n\times n}$,  the number of nonzero singular values of $A$ is  equal to  the
	${\rm rank}(A)$. Then ${\rm rank}(G)={\rm rank}(H)$. The rank of any square matrix equals the number of nonzero eigenvalues (with repetitions), so the number of nonzero singular values of $A$ equals the rank of $A^TA$ (notice that $A^T=A^*$). As $A^TA$ and $A$ have the same kernel then it follows from the Rank-Nullity Theorem that $A^TA$ and $A$ have the same rank. On the other hand, from the Rank-Nullity Theorem we have 
	\begin{eqnarray}
	\big |{\rm nullity}(G)-{\rm nullity}(H)\big|&=&\big |\left(|V(G)|-{\rm rank}(G)\right)-\left(|V(H)|-{\rm rank}(H)\right)\big |\nonumber\\
	&=&\big ||V(G)|-|V(H)|\big |.\nonumber\
	\end{eqnarray}
	Finally, since  $p(G)+n(G)={\rm rank}(G)={\rm rank}(H)=p(H)+n(H)$ then $p(G)-p(H)=n(H)-n(G).$
	
	It follows from the fact that the singular values of a graph $G$ coincide with the absolute values of the eigenvalues of $G$.
\end{proof}

\begin{example}{\bf Coenergetic does not imply Singularly cospectral  }
	
	We present a pair of coenergetic nonsingularly cospectral graphs. For this
	purpose, we consider the cartesian product and the strong  product of two complete graphs. Let $r,s$ two integer number with $r>3$ and $s>3$. Let $G=K_r \boxtimes K_s$ and $H=K_r\square K_s$. Then $\mathcal{E}(G)=\mathcal{E}(H)$; i.e., $G$   and $H$ are coenergetic~\cite{BonifacioVA08} but we have that such graphs are no singularly cospectral since
	$|E(G)|=\frac{rs}{2}(s+r-2)$ and $|E(H)|=\frac{rs}{2}(r-1)(s-1).$ 
\end{example}

In the sequel, the following lemma is useful to obtain a characterization for a pair of singularly cospectral graphs. 

\begin{lemma}\label{normakn}
	Let $x_1, x_2,\cdots, x_r$ and $y_1, y_2,\cdots, y_s$ be non-increasing sequences of
	positive real numbers. If
	\begin{equation}\label{igualdadkn}
	\sum_{i=1}^r x_i^{k_n}=\sum_{j=1}^s y_j^{k_n} \qquad {\rm for\: all}\:  n\in\mathbb{N},
	\end{equation}
	where $\{k_n\}_{n\in \mathbb{N}}$ is a sequence of real numbers such that $k_n\geq 1$ for all $n\in \mathbb{N}$ and $k_n\to \infty$, 
	then $r=s$ and $x_i=y_i$ for $i=1, \cdots, r.$
\end{lemma}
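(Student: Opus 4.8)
The plan is to exploit the fact that, as the exponents $k_n$ grow, the largest terms dominate each power sum, which is the discrete analogue of the limit recorded in \eqref{limitenormap}. Concretely, I would first show that the two sequences share the same maximum value, then that this value occurs with the same multiplicity in both, and finally peel it off and argue by induction. First I would show $x_1=y_1$. Since $x_1\geq x_i$ for every $i$, we have $x_1^{k_n}\leq \sum_{i=1}^r x_i^{k_n}=\sum_{j=1}^s y_j^{k_n}\leq s\,y_1^{k_n}$, whence $(x_1/y_1)^{k_n}\leq s$ for every $n$. Were $x_1>y_1$, the left-hand side would tend to $+\infty$ as $k_n\to\infty$, a contradiction; the symmetric estimate rules out $x_1<y_1$, so $x_1=y_1=:a$.

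Next I would match the multiplicity of $a$. Let $\alpha$ and $\beta$ denote the number of indices for which $x_i=a$ and $y_j=a$, respectively. Dividing \eqref{igualdadkn} by $a^{k_n}$ gives
$$
\alpha+\sum_{x_i<a}\left(\frac{x_i}{a}\right)^{k_n}=\beta+\sum_{y_j<a}\left(\frac{y_j}{a}\right)^{k_n}.
$$
Each ratio occurring in the two finite sums is strictly less than $1$, so every such term tends to $0$ as $k_n\to\infty$; letting $n\to\infty$ therefore yields $\alpha=\beta$. Subtracting $\alpha\,a^{k_n}$ from both sides of \eqref{igualdadkn} then produces $\sum_{x_i<a}x_i^{k_n}=\sum_{y_j<a}y_j^{k_n}$ for all $n$.

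These truncated families are again non-increasing sequences of positive reals satisfying the hypotheses of the lemma, but the truncated $x$-sequence now has one fewer distinct value. I would therefore induct on the number of distinct values among $x_1,\dots,x_r$, taking the empty sequence as the trivial base case (which forces the other sequence to be empty as well, since all entries are positive). The inductive hypothesis gives term-by-term equality of the truncated sequences and equality of their lengths; reinstating the $\alpha=\beta$ copies of $a$ then yields $r=s$ and $x_i=y_i$ for all $i$.

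The only delicate points here are bookkeeping ones: correctly tracking the multiplicity of the common maximum and setting the induction on the number of \emph{distinct} values rather than on $r$ or $s$ directly. The analytic input—that a finite sum of $k_n$-th powers of numbers lying in $[0,1)$ vanishes in the limit—is elementary, so I expect no genuine obstacle beyond keeping the indexing straight.
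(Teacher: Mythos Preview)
Your proof is correct and follows essentially the same strategy as the paper's: identify the common maximum by letting $k_n\to\infty$, peel it off, and iterate. The only cosmetic difference is that the paper removes one index at a time (so after showing $x_1=y_1$ it subtracts $x_1^{k_n}$ from both sides and repeats to obtain $x_i=y_i$ for $i\le\min\{r,s\}$, then handles the leftover indices), whereas you remove all copies of the maximum value at once and induct on the number of distinct values; both bookkeepings lead to the same conclusion with the same analytic input.
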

\begin{proof}
	We denote by $x=(x_1, x_2,\cdots, x_r, 0, 0, \cdots)$ and $y=(y_1, y_2,\cdots, y_s, 0, 0, \cdots)$.  We have, by  \eqref{igualdadkn}, that $\|x\|_{k_n}=\|y\|_{k_n}$ for all $k_n$, then from \eqref{limitenormap} we have that 
	$$x_1=\|x\|_{\infty}=\lim\limits_{n\to \infty} \|x\|_{k_n}=\lim\limits_{n\to \infty} \|y\|_{k_n}=\|y\|_{\infty}=y_1.$$
	By repeating this procedure we can prove that 
	\begin{equation}\label{igualdadi}
	x_i=y_i \qquad \forall  i=1, 2, \cdots, \min\{r, s\}.
	\end{equation}
	Now we assume that $r<s.$ Then by \eqref{igualdadkn} and \eqref{igualdadi} we obtain that $\sum_{j=r+1}^s y_j^{k_n}=0$, as $y_j$ is a positive real  number for any $j=r+1, \cdots, s$ we  conclude that $y_j=0$ for all $j\geq r+1.$ This shows that $r=s$ and $x_i=y_i$ for all $i=1, \cdots, r.$
\end{proof}

We can then prove the following theorem for characterizing when two graphs $G$ and $H$ are singularly cospectral graphs in terms of their functions $f_G(x)$ and $f_H(x)$.

\begin{theorem} \label{equivalencia}
	The following conditions are equivalent:
	\begin{enumerate}
		\item $G$ and $H$ are singularly cospectral.
		
		\item  $f_G(p)=f_H(p)$ for all $p\geq 1.$
		\item $f_G(x_n)=f_H(x_n)$ for any sequence $\{x_n\}_{n\in \mathbb{N}}$ such that $x_n\geq 1$ for all $n\in \mathbb{N}$ and $x_n\to \infty$.
	\end{enumerate}
\end{theorem}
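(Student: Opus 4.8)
The plan is to establish the cycle of implications $(1) \Rightarrow (2) \Rightarrow (3) \Rightarrow (1)$, where the substantive content lies entirely in the last step and is already supplied by Lemma~\ref{normakn}; the other two implications are essentially formal.

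For $(1) \Rightarrow (2)$, I would argue directly from the definition of $f_G$. If $G$ and $H$ are singularly cospectral, then their multisets of nonzero singular values coincide (with multiplicities). Since $f_G(p) = \left(\sum_{i} s_i(G)^p\right)^{1/p}$ and the zero singular values contribute nothing to the sum for any $p \geq 1$, the value $f_G(p)$ depends only on the nonzero singular values. Hence the sums defining $f_G(p)$ and $f_H(p)$ agree, and $f_G(p) = f_H(p)$ for every $p \geq 1$. The implication $(2) \Rightarrow (3)$ is then immediate: if $f_G$ and $f_H$ coincide on all of $[1,\infty)$, then in particular they coincide along any sequence $\{x_n\}$ with $x_n \geq 1$ and $x_n \to \infty$.

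For $(3) \Rightarrow (1)$, I would fix one admissible sequence, say $x_n = n$, which is legitimate since (3) guarantees the equality for \emph{every} such sequence. Set $r = {\rm rank}(G)$ and $s = {\rm rank}(H)$, and list the nonzero singular values as the non-increasing positive sequences $s_1(G) \geq \cdots \geq s_r(G) > 0$ and $s_1(H) \geq \cdots \geq s_s(H) > 0$. Raising the equality $f_G(x_n) = f_H(x_n)$ to the power $x_n$ and discarding the vanishing terms gives
\[
\sum_{i=1}^{r} s_i(G)^{x_n} = \sum_{j=1}^{s} s_j(H)^{x_n} \qquad \text{for all } n \in \mathbb{N}.
\]
This is exactly the hypothesis of Lemma~\ref{normakn} with $k_n = x_n$, so the lemma yields $r = s$ and $s_i(G) = s_i(H)$ for all $i$; that is, $G$ and $H$ share the same nonzero singular values with the same multiplicities, hence are singularly cospectral.

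I do not expect a genuine obstacle here, since the analytic heart of the matter was isolated in Lemma~\ref{normakn}. The only point demanding a little care is the bookkeeping of the zero singular values: they must be stripped off before invoking the lemma, whose hypotheses concern strictly positive sequences, and one should note that $G$ and $H$ may have different numbers of vertices so that the padding with zeros is genuinely needed to reconcile the two sums. With that caveat noted, the whole proof should be short.
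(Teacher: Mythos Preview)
Your proposal is correct and follows the same strategy as the paper: the implications $(1)\Rightarrow(2)$ and $(2)\Rightarrow(3)$ are declared trivial, and $(3)\Rightarrow(1)$ is obtained by raising $f_G(x_n)=f_H(x_n)$ to the $x_n$-th power and invoking Lemma~\ref{normakn}. The only cosmetic difference is that you specialize to $x_n=n$, whereas the paper simply works with the given sequence $\{x_n\}$; since Lemma~\ref{normakn} applies to any sequence tending to infinity, both presentations are equally valid.
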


\begin{proof}
	The implications $1\Rightarrow 2$ and $2\Rightarrow 3$ are trivial.  Now,  suppose that 3 holds. Then
	$$
	\sum_{i=1}^r s_i(G)^{x_n}=\sum_{j=1}^s s_j(H)^{x_n} \qquad {\rm for\: all}\:  n\in\mathbb{N},$$
	hence by Lemma \ref{normakn} we have that $G$ and $H$ are singularly cospectral.
	
\end{proof}
%In the following theorem we shall characterize the singularly cospectral notion. 

Now, we are ready to prove the main result of this section, giving an answer to Problem~\ref{Niki}.

\begin{theorem}\label{singcospectral}
	Two graphs are singularly cospectral if and only if, for each $k\in \mathbb{N}$,
	they have the same number of closed walks of length $2k.$
\end{theorem}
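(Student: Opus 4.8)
The plan is to connect the combinatorial quantity ``number of closed walks of length $2k$'' with the analytic quantity $f_G(2k)$, and then to invoke the characterization already established in Theorem~\ref{equivalencia}. The bridge in both directions is item~(3) of Lemma~\ref{propbasicas}: for $k>1$ the number $w_{2k}(G)$ of closed walks of length $2k$ in $G$ equals $\frac{(f_G(2k))^{2k}}{4k}$. I would fix this notation $w_{2k}(G)$ at the outset and use it throughout.

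For the forward implication, suppose $G$ and $H$ are singularly cospectral. By Theorem~\ref{equivalencia} this gives $f_G(p)=f_H(p)$ for every $p\geq 1$, in particular $f_G(2k)=f_H(2k)$ for all $k$. For each $k>1$, Lemma~\ref{propbasicas}(3) then yields $w_{2k}(G)=\frac{(f_G(2k))^{2k}}{4k}=\frac{(f_H(2k))^{2k}}{4k}=w_{2k}(H)$. The only case the formula does not cover is $k=1$; here I would argue directly that the number of closed walks of length $2$ equals twice the number of edges, so the equality $w_2(G)=w_2(H)$ follows from Lemma~\ref{prop_nec}(1), or equivalently from $f_G(2)=f_H(2)$ together with $f_G(2)=\sqrt{2m_G}$.

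For the converse, assume $w_{2k}(G)=w_{2k}(H)$ for all $k\in\mathbb{N}$. For every $k>1$ the identity $\frac{(f_G(2k))^{2k}}{4k}=\frac{(f_H(2k))^{2k}}{4k}$ gives $(f_G(2k))^{2k}=(f_H(2k))^{2k}$, and since $f_G(2k)$ and $f_H(2k)$ are nonnegative, taking $2k$-th roots yields $f_G(2k)=f_H(2k)$ for all $k\geq 2$. Now I would take the sequence $x_n=2(n+1)$, which satisfies $x_n\geq 1$ and $x_n\to\infty$; along it $f_G(x_n)=f_H(x_n)$, so condition~(3) of Theorem~\ref{equivalencia} applies and forces $G$ and $H$ to be singularly cospectral.

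I expect the delicate points to be bookkeeping rather than conceptual: verifying that the $k=1$ term is genuinely not an obstruction (handled by the edge count), and checking that the chosen diverging sequence meets the hypotheses of Theorem~\ref{equivalencia}(3), so that the limiting argument underlying Lemma~\ref{normakn} can be triggered. Beyond that, the whole argument reduces to inverting Nikiforov's closed-walk formula and using positivity of the $f$-values to extract roots.
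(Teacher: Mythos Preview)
Your proposal is correct and follows essentially the same route as the paper: use Lemma~\ref{propbasicas}(3) to translate between closed-walk counts and the values $f_G(2k)$, and invoke Theorem~\ref{equivalencia} (via a diverging sequence of even integers) for the converse. If anything, you are slightly more careful than the paper in isolating and handling the $k=1$ case separately, which the paper's proof passes over without comment.
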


%In order to prove Theorem \ref{singcospectral} we need the following result.

\begin{proof}
	Assume that $G$ and $H$ are singularly cospectral, then $s_i(G)=s_i(H)$ for all $i=1, \cdots, {\rm rank}(G)={\rm rank}(H)$ and $f_G(2k)=f_H(2k)$ for all $k\in \mathbb{N}.$ This implies, by  Lemma \ref{propbasicas},  that the number of closed walks of length $2k$ of $G$ and $H$ coincides.
	
	Conversely, assume that $G$ and $H$ have the same number of closed walks of length $2k$ for all $k\in \mathbb{N}.$ This implies, by Lemma \ref{Nikiforov}, that $f_G(2k)=f_H(2k)$ for any $k\in \mathbb{N}$. Then by Theorem \ref{equivalencia} we conclude that $G$ and $H$ are singularly cospectral.
\end{proof}

%The following example  is an immediate consequence of previous Theorem. We exhibit a pair of singularly cospectral graphs. Let $F$ the union disjoint of two copies of the cycle $C_5$ and $G=C_{10}$. We use $V(F)=\{v_1, v_2, \cdots, v_{10}\}=V(G)$ to denote the set of vertices of $F$ and $G$. For each $k\in \mathbb{N}$ and $v_{i_0}\in V(G)$, we denote by $$C^{v_{i_0}, G}_{2k}=\{v_{n_i}\}_{n=1,\cdots, 2k+1} \quad {\rm and} \quad C^{v_{i_0}, F}_{2k}=\{v_{m_i}\}_{m=1,\cdots, 2k+1}$$  the closed walk of lenght $2k$ with its endpoints equal to  $v_{i_0}$ in $G$ and $F$, respectively. Then we have that $5\mid k$, i.e. the only closed walks in $F$ and  $G$,  of even  length are exactly, $C^{v_{i_0}, F}_{10k}$ and $C^{v_{i_0}, G}_{10k}$, respectively. 

The following example  is an immediate consequence of previous Theorem. We exhibit a pair of singularly cospectral graphs. Let $G=C_{2j}$ and $F$ be the union disjoint of two copies of the cycle $C_j$  with $j\ge 3$. We use $V(F)=\{v_1, v_2, \cdots, v_{2j}\}=V(G)$ to denote the set of vertices of $F$ and $G$. For each $k\in \mathbb{N}$, we denote by $W^{G}_{2k}=\{v_{n_i}\}_{i=1,\cdots, 2k+1} $  a closed walk of lenght $2k$ with its endpoint equal to  $v_{n_1}=v_{n_{2k+1}}$  in $G$. We define the following function between the closed walks of even length in $G$ ($\mathcal{F}_G$) and the closed walks of even length in $F$ ($\mathcal{F}_F$) as follows:
$$
\phi: \mathcal{F}_G\to  \mathcal{F}_F, \phi(W^{G}_{2k})=W^{F}_{2k}=\{v_{m_i}\}_{i=1,\cdots, 2k+1}, 
$$
%$\quad {\rm and} \quad W^{F}_{2k}=\{v_{m_i}\}_{i=1,\cdots, 2k+1}$ and $v_{i_{m_1}}$ and $F$, respectively 
where $m_i = r_j(n_i)$ if $n_1\in \{1, 2, \cdots, j\}$ or $m_i = r_j(n_i) + j$  if $n_1\in \{j+1,\ldots, 2j\}$ for each $1\le i\le 2k+1$, notice that $r_j(n_i)$ represent the remainder when integer $n_i$ is divided by $j$. 
Clearly this is a bijection between $\mathcal{F}_G$ and $\mathcal{F}_F$ and by Theorem \ref{singcospectral} we conclude that this pair of graphs are singularly cospectral.

Even though Theorem~\ref{singcospectral} gives a characterization that may be useful from a theoretic perspective, it does not seem to be helpful from a practical point of view when we try to construct pairs of singularly cospectral graphs, unless these graphs are well-structured as in the last example.

\section{Constructing pairs of \textbf{NCSC}}~\label{sec: construction sc noncospectral}

%Recall that two graphs are called cospectral if they have the same spectrum and almost cospectral if their non-zero eigenvalues (and their
%multiplicities) coincide. Obviously, cospectral graphs or almost cospectral  are  singularly cospectral, but the converse is not true. These facts are included in the following examples and the details are left to the reader.
Recall that two graphs are called cospectral if they have the same spectrum. Obviously, cospectral graphs are  singularly cospectral, but the converse may not be true. See Fig.~\ref{fig: SCNC uno conexo y el otro disconexo} for an example.%These facts are included in the following example and the details are left to the reader.

As the adjacency matix of a graph is a symmetric matrix with real entries, then it has a spectral decomposition $A=\sum_{i=1}^m \mu_i P_i$ where $\mu_1, \mu_2, \cdots, \mu_m$ are the distinct
eigenvalues of $G$ and $P_i$ represents the orthogonal projection onto the eigenspace $E_{\mu_i}$. In the next result we present how to construct a pair of singularly cospectral graphs but not cospectral from a graph using its spectral decomposition.

\begin{figure}
	\centering
	\begin{tikzpicture}[scale=0.75]
	
	\coordinate (y1) at (7.5,2);
	\coordinate (y2) at (8.75,1); 
	\coordinate (y3) at (6.25,1); 
	\coordinate (y4) at (7.5,-2);
	\coordinate (y5) at (7.5,0);
	
	\coordinate (y6) at (10.75,2);
	\coordinate (y7) at (12,1); 
	\coordinate (y8) at (9.5,1); 
	\coordinate (y9) at (10.75,-2);
	\coordinate (y10) at (10.75,0);
	\coordinate (y11) at (9.25, -3);
	
	\foreach \punto in {1,...,10}
	\fill (y\punto) circle(3pt);
	
	\node [above] at (y1) {$1$};
	\node [above] at (y2) {$2$};
	\node [above] at (y3) {$3$};
	\node [below] at (y4) {$4$};
	\node [below] at (y5) {$5$};
	\node [above] at (y6) {$6$};
	\node [above] at (y7) {$7$};
	\node [above] at (y8) {$8$};
	\node [below] at (y9) {$9$};
	\node [below] at (y10) {$10$};
	
	\node[below] at (y11) {$H_F=2F$};
	
	\draw[thick] (y1)--(y2);
	\draw[thick] (y3)--(y2);
	\draw[thick] (y1)--(y3);
	\draw[thick] (y2)--(y4);
	\draw[thick] (y3)--(y4);
	\draw[thick] (y3)--(y5);
	\draw[thick] (y2)--(y5);
	\draw[thick] (y7)--(y6);
	\draw[thick] (y7)--(y8);
	\draw[thick] (y6)--(y8);
	\draw[thick] (y10)--(y8);
	\draw[thick] (y7)--(y10);
	\draw[thick] (y9)--(y8);
	\draw[thick] (y7)--(y9);
	
	%-----------------------------
	\coordinate (z1) at (2.5,2);
	\coordinate (z2) at (2.5,1); 
	\coordinate (z3) at (2.5,0); 
	\coordinate (z4) at (2.5,-1);
	\coordinate (z5) at (2.5,-2);
	\coordinate (z6) at (4, 2);
	\coordinate (z7) at (4,1);
	\coordinate (z8) at (4, 0); 
	\coordinate (z9) at (4,-1); 
	\coordinate (z10) at (4,-2);
	\coordinate (z11) at (3.25,-3);

	\foreach \punto in {1,...,10}
	\fill (z\punto) circle(3pt);
	
	\node [left] at (z1) {$1$};
	\node [left] at (z2) {$2$};
	\node [left] at (z3) {$3$};
	\node [left] at (z4) {$4$};
	\node [left] at (z5) {$5$};
	\node [right] at (z6) {$6$};
	\node [right] at (z7) {$7$};
	\node [right] at (z8) {$8$};
	\node [right] at (z9) {$9$};
	\node [right] at (z10) {$10$};
	
	\node[below] at (z11) {$G_F=F\times K_2$};
	
	\draw[thick] (z1)--(z7);
	\draw[thick] (z1)--(z8);
	\draw[thick] (z2)--(z8);
	\draw[thick] (z2)--(z6);
	\draw[thick] (z2)--(z10);
	\draw[thick] (z3)--(z6);
	\draw[thick] (z3)--(z7);
	\draw[thick] (z3)--(z10);
	\draw[thick] (z3)--(z9);
	\draw[thick] (z4)--(z7);
	\draw[thick] (z4)--(z8);
	\draw[thick] (z5)--(z8);
	\draw[thick] (z5)--(z7);

	%-------------------------------------------
	%--------------------------------------------------
	\coordinate (w1) at (-1,2);
	\coordinate (w2) at (0.25,1); 
	\coordinate (w3) at (-2.25,1); 
	\coordinate (w4) at (-1,-2);
	\coordinate (w5) at (-1,0);
	\coordinate (w6) at (-1, -3);
	%\coordinate (w6) at (1.25,-1);
	%\coordinate (w7) at (-1.25,-1);

	\foreach \punto in {1,...,5}
	\fill (w\punto) circle(3pt);
	
	\node [above] at (w1) {$1$};
	\node [above] at (w2) {$2$};
	\node [above] at (w3) {$3$};
	\node [below] at (w4) {$4$};
	\node [below] at (w5) {$5$};
	
	\node [below] at (w6) {$F$};
	
	\draw[thick] (w1)--(w2);
	\draw[thick] (w3)--(w2);
	\draw[thick] (w1)--(w3);
	\draw[thick] (w2)--(w4);
	\draw[thick] (w3)--(w4);
	\draw[thick] (w3)--(w5);
	\draw[thick] (w2)--(w5);

	\end{tikzpicture}
	\caption{The graphs $G_F$ and $H_F$ are \textbf{NCSC}.}\label{fig: SCNC uno conexo y el otro disconexo}
\end{figure}
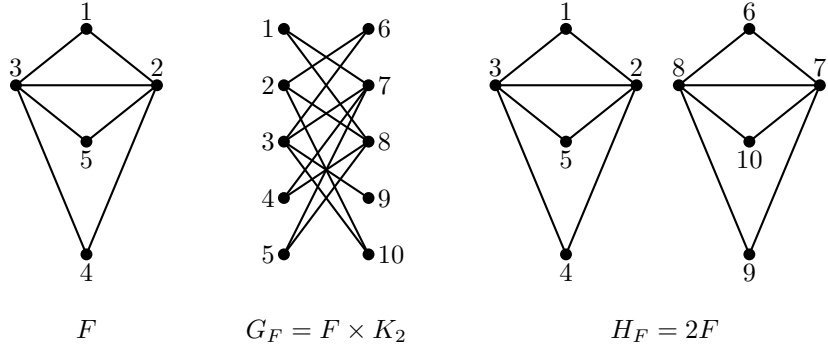
%[HAY QUE CENTRAR LAS FIGURAS]

\begin{theorem}\label{GFHF}
	Let $F$ be a  nonbipartite graph with $|V(F)|=n$ and 
	$n\geq 3$. Then, the bipartite graph  $G_F=F\times K_2$  and  $H_F=2F$ (the union disjoint of two copies of $F$) are \textbf{NCSC}.
\end{theorem}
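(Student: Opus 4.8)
The plan is to read off both spectra from that of $F$ and then compare absolute values of eigenvalues (for singular cospectrality) against the spectra themselves (for noncospectrality). Let $\lambda_1,\dots,\lambda_n$ be the eigenvalues of $F$, listed with multiplicity. Ordering the vertices of $G_F=F\times K_2$ by the two copies of $F$, its adjacency matrix takes the block form $A_{G_F}=\begin{pmatrix} 0 & A_F \\ A_F & 0\end{pmatrix}$; from $A_F v=\lambda v$ the vectors $(v,v)$ and $(v,-v)$ are eigenvectors with eigenvalues $\lambda$ and $-\lambda$, so the eigenvalues of $G_F$ are exactly $\lambda_1,-\lambda_1,\dots,\lambda_n,-\lambda_n$. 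For $H_F=2F$ we have $A_{H_F}=A_F\oplus A_F$, whose eigenvalues are $\lambda_1,\lambda_1,\dots,\lambda_n,\lambda_n$. Both graphs have $2n$ vertices.

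First I would prove singular cospectrality. The singular values of a graph are the absolute values of its eigenvalues, so for $G_F$ each $|\lambda_i|\neq 0$ is contributed once by $\lambda_i$ and once by $-\lambda_i$, while for $H_F$ each $|\lambda_i|\neq 0$ is contributed twice by the doubled eigenvalue $\lambda_i$. Hence $G_F$ and $H_F$ have identical nonzero singular values with identical multiplicities, and they are singularly cospectral. Alternatively, this follows from Theorem~\ref{singcospectral}: for every $k$ the number of closed walks of length $2k$ equals $\mathrm{trace}(A^{2k})=2\sum_i\lambda_i^{2k}$ in both graphs, since the even power is insensitive to the signs introduced by the $K_2$ factor.

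Next I would show the pair is noncospectral. The graph $F\times K_2$ is bipartite, with colour classes given by the second coordinate (this is also visible from the off-diagonal block form above), so by Proposition~\ref{propiedades_basicas}(4) the spectrum of $G_F$ is symmetric about $0$; equivalently $\mathrm{trace}(A_{G_F}^{2k+1})=0$ for all $k$. On the other hand $\mathrm{trace}(A_{H_F}^{2k+1})=2\,\mathrm{trace}(A_F^{2k+1})$ counts twice the closed walks of odd length $2k+1$ in $F$. Because $F$ is nonbipartite it contains an odd cycle, of length $2k_0+1$ say, and traversing that cycle yields closed walks, so $\mathrm{trace}(A_F^{2k_0+1})>0$. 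Consequently $\sigma(G_F)$ and $\sigma(H_F)$ differ in their $(2k_0+1)$-th moment, the spectra are different, and $G_F,H_F$ are not cospectral. Combined with the previous paragraph, this shows $(G_F,H_F)$ is an \textbf{NCSC} pair.

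The eigenvalue bookkeeping and the trace identities are routine; the single genuine step is the last one, converting the nonbipartiteness hypothesis into a strictly positive count of closed walks of some odd length. This is precisely where the hypothesis is used: it forces $\sigma(H_F)$ to be asymmetric, whereas the $K_2$ factor forces $\sigma(G_F)$ to be symmetric. I expect this to be the main, and essentially the only, obstacle.
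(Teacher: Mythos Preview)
Your proof is correct and follows essentially the same route as the paper: compute $\sigma(G_F)=\{\pm\lambda_i\}$ and $\sigma(H_F)=\{\lambda_i,\lambda_i\}$ from the block structure, match the absolute values for singular cospectrality, and use nonbipartiteness of $F$ to separate the two spectra. The only stylistic difference is in the noncospectrality step: the paper invokes Proposition~\ref{propiedades_basicas} to assert that $\sigma(F)$ (hence $\sigma(H_F)$) is not symmetric about zero, whereas you argue directly via $\mathrm{trace}(A_F^{2k_0+1})>0$ for the length of an odd cycle in $F$; your version is slightly more self-contained since the proposition as stated only gives the forward implication (bipartite $\Rightarrow$ symmetric spectrum).
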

\begin{proof}
	Assume that $\sigma(F)=\{[\lambda_1]^{m_1},\cdots, [\lambda_{k}]^{m_{k}}\}$  with spectral decomposition $A_F=\sum_{i=1}^k \lambda_i P_i$. Notice that $\sigma(F)$ is not symmetric respect to zero, because $F$ is a nonbipartite graph (see Proposition~\ref{propiedades_basicas}).  Then, we obtain that

	\begin{eqnarray}
	A_{H_F}&=&\left(
	\begin{matrix}
	A_F& 0 \\
	\\
	0 & A_F
	\end{matrix}
	\right)=\sum_{i=1}^k\lambda_i \underbrace{\left(
		\begin{matrix}
		P_i& 0 \\
		\\
		0 & P_i
		\end{matrix}
		\right)}_{\hat{P}_i}=\sum_{i=1}^k \lambda_i \hat{P}_i,
	\end{eqnarray}
	and 
	\begin{eqnarray}
	A_{G_F}&=&\left(
	\begin{matrix}
	0& A_F \\
	\\
	A_F & 0
	\end{matrix}
	\right)=\sum_{i=1}^k\lambda_i \left(
	\begin{matrix}
	0& P_i \\
	\\
	P_i & 0
	\end{matrix}
	\right)\nonumber \\&=&\sum_{i=1}^k \lambda_i \left[\frac12\left(
	\begin{matrix}
	P_i& P_i \\
	\\
	P_i & P_i
	\end{matrix}
	\right) -\frac12 \left(
	\begin{matrix}
	P_i& -P_i \\
	\\
	-P_i & P_i
	\end{matrix}
	\right) \right] \nonumber\\&=&
	\sum_{i=1}^k \lambda_i \underbrace{\frac12\left(
		\begin{matrix}
		P_i& P_i \\
		\\
		P_i & P_i
		\end{matrix}
		\right)}_{\widetilde{P}_i^+} -\sum_{i=1}^k \lambda_i \underbrace{\frac12 \left(
		\begin{matrix}
		P_i& -P_i \\
		\\
		-P_i & P_i
		\end{matrix}
		\right)}_{\widetilde{P}_i^-}\nonumber\\&=&\sum_{i=1}^k \lambda_i \widetilde{P}_i^+ + \sum_{i=1}^k (-\lambda_i) \widetilde{P}_i^-.
	\end{eqnarray}
	Then, we conclude that 
	$$\sigma(G_F)=\{[-\lambda_1]^{m_1},\cdots, [-\lambda_{k}]^{m_{k}}, [\lambda_1]^{m_1},\cdots, [\lambda_{k}]^{m_{k}}\}$$
	and  $\sigma(H_F)=\{[\lambda_1]^{2m_1},\cdots, [\lambda_{k}]^{2m_{k}}\}$. This completes the proof.
\end{proof}

\begin{remark}\label{scnocoe}
	If $F=K_n$ with $n\ge 3$, then the $n-1$-regular bipartite graph $G_F=F\times K_2$ and the disconnected $n-1$-regular graph $2K_n$ are \textbf{NCSC}.
\end{remark}

In \cite{Row}, Rowlinson obtained
the relationship between the characteristic polynomials of a graph $G$ and the graph $G^*$ constructed by adding a new vertex to $G$, from its spectral decomposition. More precisely, the  graph modified by the addition of a vertex with any prescribed set of neighbours on $V(G)$. In order to prove our  result we need recall such statement.

\begin{theorem} [\cite{Row}, Theorem 2.1]\label{addvertex}
	Let $G$ be a finite graph whose adjacency matrix $A$ has spectral decomposition $A=\sum_{i=1}^m \mu_i P_i$. Let $\emptyset \neq S \subseteq V(G)=\{1, 2, \cdots, n\}$ and let $G^*$ be the
	graph obtained from G by adding one new vertex whose neighbours are the vertices
	in S. Then
	\begin{equation}
	P_{G^*}(x)=P_{G}(x)\left( x- \sum_{i=1}^m \frac{\rho_i^2}{x-\mu_i} \right),
	\end{equation}
	where $\rho_i=\|\sum_{k\in S} P_i e_k\|$ and $\{e_1,\cdots, e_n\}$ is the standard orthonormal basis of $\mathbb{R}^n$.
\end{theorem}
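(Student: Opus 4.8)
The plan is to realize $A_{G^*}$ as a bordered matrix and reduce the computation of its characteristic polynomial to a Schur complement. Let $b=\sum_{k\in S} e_k$ be the characteristic vector of the neighbour set $S$, so that
\[
A_{G^*}=\begin{pmatrix} A & b \\ b^T & 0 \end{pmatrix}
\qquad\text{and}\qquad
xI_{n+1}-A_{G^*}=\begin{pmatrix} xI_n-A & -b \\ -b^T & x \end{pmatrix}.
\]
I would first work under the assumption that $x$ is not an eigenvalue of $A$, so that $xI_n-A$ is invertible, and apply the Schur complement formula to the $(1,1)$ block. This yields
\[
P_{G^*}(x)=\det(xI_n-A)\cdot\bigl(x-b^T(xI_n-A)^{-1}b\bigr)=P_G(x)\bigl(x-b^T(xI_n-A)^{-1}b\bigr),
\]
so that the factor $P_G(x)$ appears immediately and the entire problem is reduced to identifying the scalar $b^T(xI_n-A)^{-1}b$ with $\sum_{i=1}^m \rho_i^2/(x-\mu_i)$.

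For that identification I would use the spectral decomposition together with the fact that the orthogonal projections form a resolution of the identity, namely $\sum_{i=1}^m P_i=I_n$ and $P_iP_j=\delta_{ij}P_i$. These give $xI_n-A=\sum_{i=1}^m (x-\mu_i)P_i$ and hence $(xI_n-A)^{-1}=\sum_{i=1}^m (x-\mu_i)^{-1}P_i$ whenever $x\notin\sigma(A)$, so that
\[
b^T(xI_n-A)^{-1}b=\sum_{i=1}^m \frac{b^TP_ib}{x-\mu_i}.
\]
The key algebraic step is to rewrite each quadratic form $b^TP_ib$. Since every $P_i$ is symmetric and idempotent, $b^TP_ib=b^TP_i^TP_ib=\|P_ib\|^2$, and because $P_ib=\sum_{k\in S}P_ie_k$ this is exactly $\rho_i^2$. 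Substituting back produces the asserted formula for all $x\notin\sigma(A)$.

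Finally, I would upgrade this from an identity valid away from $\sigma(A)$ to the stated identity of polynomials. Both sides are polynomials in $x$: the left-hand side by definition, and the right-hand side because $(x-\mu_i)$ divides $P_G(x)=\prod_{i=1}^m (x-\mu_i)^{m_i}$, so every term $P_G(x)\rho_i^2/(x-\mu_i)$ is itself a polynomial. Two polynomials agreeing at all but finitely many points coincide, which closes the argument. The proof is essentially routine; the only points requiring care are the restriction to $x\notin\sigma(A)$ needed to legitimize the Schur complement and the resolvent expansion, and the recognition of $b^TP_ib$ as the squared norm $\|P_ib\|^2=\rho_i^2$, after which the final polynomial-identity step removes the excluded values.
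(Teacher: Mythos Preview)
Your proof is correct and follows the standard approach: write $A_{G^*}$ as a bordered matrix, apply the Schur complement to isolate $P_G(x)$, expand the resolvent $(xI_n-A)^{-1}$ via the spectral decomposition, and use $P_i=P_i^TP_i$ to identify $b^TP_ib$ with $\rho_i^2$. Each step is sound, and you correctly handle the passage from $x\notin\sigma(A)$ to all $x$ by noting that both sides are polynomials.

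Note, however, that the paper does \emph{not} supply its own proof of this theorem: it is quoted from Rowlinson~\cite{Row} (as Theorem~2.1 there) and used as a black box. So there is nothing in the present paper to compare your argument against. Your proof is essentially the one Rowlinson gives, and it is the natural one.
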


Combining theorems~\ref{GFHF} and~\ref{addvertex}, we obtain a construction that leads to a pair of~\textbf{NCSC} graphs by properly adding a vertex to the original pair of graphs. 

We will use the following notation for the below lemma. Let $F, G_F$ and $H_F$ as in Theorem \ref{GFHF}, where $V(F)=\{x_1,\ldots,x_n\}$ and $V(G_F)=V(H_F)=\{x_1,\ldots,x_n\}\cup \{y_1,\ldots,y_n\}$, where both sets correspond to the same copies of the vertices of $F$.  For any  $1\leq j\leq n,$ let $G_{F, j}^v$ be the graph obtained from $G_F$ by adding one new vertex $v$ whose neighbours are the vertices in $S=\{x_1,\cdots, x_j\}\cup \{y_1,\cdots, y_j\}$. Analogously, we define $H_{F, j}^w$ by adding a vertex $w$ and connecting it to the same set $S$ as $G_{F, j}^v$.

\begin{theorem}\label{polHF}
	There exist $\lambda_1,\ldots,\lambda_m$, $\rho_1,\ldots,\rho_m$, $m_1,\ldots,m_m$ such that
	\begin{equation*}
	P_{G_{F, j}^v}(x)=\prod_{i=1}^{m}(x-\lambda_i)^{2m_i-1}\left( x\prod_{i=1}^{m}(x-\lambda_i)- \sum_{i=1}^m \rho_i^2\prod_{j=1, j\neq i}^{m}(x-\lambda_j)\right)
	\end{equation*}
	and 
	\begin{equation*}
	P_{H_{F, j}^w}(x)=\prod_{i=1}^{m}(x-\lambda_i)^{m_i-1}\prod_{i=1}^{m}(x+\lambda_i)^{m_i}\left( x\prod_{i=1}^{m}(x-\lambda_i)- \sum_{i=1}^m \rho_i^2\prod_{j=1, j\neq i}^{m}(x-\lambda_j) \right).
	\end{equation*}
	In particular, $G_{F,j}^v$ and $H_{F, j}^w$ are \textbf{NCSC}.
\end{theorem}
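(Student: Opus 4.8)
The plan is to apply the vertex-addition formula of Theorem~\ref{addvertex} separately to $G_F$ and to $H_F$, feeding in the block spectral decompositions already obtained in the proof of Theorem~\ref{GFHF}. Write $\sigma(F)=\{[\lambda_1]^{m_1},\dots,[\lambda_m]^{m_m}\}$ with $A_F=\sum_{i=1}^m\lambda_iP_i$, so that $A_{H_F}=\sum_i\lambda_i\hat P_i$ and $A_{G_F}=\sum_i\lambda_i\widetilde P_i^{+}-\sum_i\lambda_i\widetilde P_i^{-}$, where $\hat P_i,\widetilde P_i^{\pm}$ are the block projections displayed in that proof. The decisive structural feature of the construction is that $S=\{x_1,\dots,x_j\}\cup\{y_1,\dots,y_j\}$ selects the \emph{same} first $j$ indices in each of the two copies, so that the vector entering Rowlinson's formula is $\sum_{s\in S}e_s=(u,u)^{\top}$ with $u=\sum_{\ell=1}^{j}e_\ell\in\mathbb R^{n}$. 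The whole argument turns on how the three families of projections act on this symmetric vector.

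First I would evaluate the Rowlinson weights $\rho_\mu=\|\sum_{s\in S}P_\mu e_s\|$, where $P_\mu$ is the projection of the eigenvalue $\mu$ of the graph being enlarged. Block multiplication gives $\hat P_i(u,u)^{\top}=(P_iu,P_iu)^{\top}$ and $\widetilde P_i^{+}(u,u)^{\top}=(P_iu,P_iu)^{\top}$, both of norm $\sqrt2\,\|P_iu\|$, while $\widetilde P_i^{-}(u,u)^{\top}=0$ because the antisymmetric block combination annihilates a symmetric input. This is the heart of the proof: setting $\rho_i:=\sqrt2\,\|P_iu\|$, the eigenvalue $\lambda_i$ carries weight $\rho_i$ for \emph{both} $H_F$ and the plus-branch of $G_F$, whereas every eigenvalue $-\lambda_i$ of $G_F$ carries weight $0$ and is therefore invisible to the new vertex. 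Consequently the rational correction term $x-\sum_{i=1}^m\rho_i^2/(x-\lambda_i)$ produced by Theorem~\ref{addvertex}, equivalently the polynomial
\[
q(x):=x\prod_{i=1}^m(x-\lambda_i)-\sum_{i=1}^m\rho_i^2\prod_{\ell\neq i}(x-\lambda_\ell),
\]
is literally the same for the two graphs; it is the parenthesized factor in both displayed formulas.

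Next I would substitute the known polynomials $P_{H_F}(x)=\prod_i(x-\lambda_i)^{2m_i}$ and $P_{G_F}(x)=\prod_i(x-\lambda_i)^{m_i}(x+\lambda_i)^{m_i}$ into Theorem~\ref{addvertex} and absorb the denominator $\prod_i(x-\lambda_i)$ into $q(x)$. For $H_F$ every exponent of $(x-\lambda_i)$ drops by one, leaving the prefactor $\prod_i(x-\lambda_i)^{2m_i-1}$; for $G_F$ only those exponents drop while the factors $(x+\lambda_i)^{m_i}$ survive untouched (their weights being zero), leaving $\prod_i(x-\lambda_i)^{m_i-1}(x+\lambda_i)^{m_i}$. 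Multiplying each prefactor by the common $q(x)$ yields the two displayed factorizations; one should note from this derivation that the purely $(x-\lambda_i)$ prefactor is the one attached to $H^w_{F,j}$ and the mixed prefactor the one attached to $G^v_{F,j}$.

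Finally, for the \textbf{NCSC} conclusion I would read off both spectra from these factorizations. Since $q(x)$ is common, its roots contribute identically; the remaining factors contribute the eigenvalue $\lambda_i$ with multiplicity $2m_i-1$ in one graph and the pair $\lambda_i,-\lambda_i$ with multiplicities $m_i-1,m_i$ in the other. Taking absolute values (and discarding any zero), each value $|\lambda_i|$ occurs with total multiplicity $(m_i-1)+m_i=2m_i-1$ in both, and the roots of $q$ agree, so the two graphs have the same nonzero singular values with the same multiplicities, hence are singularly cospectral. They are not cospectral, for equality of the two characteristic polynomials would, after cancelling $q(x)$ and the shared power of $(x-\lambda_i)$, force $\prod_i(x-\lambda_i)^{m_i}=\prod_i(x+\lambda_i)^{m_i}$, i.e.\ $\sigma(F)$ symmetric about $0$ and hence $F$ bipartite, contradicting the hypothesis (Proposition~\ref{propiedades_basicas}). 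I expect the only genuine obstacle to be the weight computation, specifically the vanishing $\widetilde P_i^{-}(u,u)^{\top}=0$ and the coincidence of the weights of $H_F$ and the plus-branch of $G_F$; once this symmetry of $S$ is exploited, the explicit polynomial identities and the singularly-cospectral-but-noncospectral dichotomy reduce to routine degree and multiplicity bookkeeping.
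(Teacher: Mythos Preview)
Your proof is correct and follows essentially the same route as the paper: apply Rowlinson's Theorem~\ref{addvertex} to each of $G_F$ and $H_F$, feed in the block projections $\hat P_i,\widetilde P_i^{\pm}$ from Theorem~\ref{GFHF}, observe that the antisymmetric projections annihilate the symmetric indicator vector of $S$ while the other two families produce equal weights, and clear denominators. Your remark that the two displayed prefactors are attached to the ``wrong'' graphs is well taken---the paper's own proof interchanges the roles of $G_F$ and $H_F$ relative to their definitions in Theorem~\ref{GFHF} (it writes $P_{G_F}(x)=\prod_i(x-\lambda_i)^{2m_i}$, which is actually $P_{H_F}$), and you additionally supply the spectral bookkeeping for the \textbf{NCSC} conclusion that the paper leaves implicit.
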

\begin{proof}
	First of all, we label the vertices of both graphs as follows: $V(G_F)=\{z_1,\cdots, z_{2n}\}$ with $z_i=x_i$ and $z_{n+i}=y_i$, for $i\in \{1, \cdots, n\}$ and $V(H_F)=\{w_1,\cdots, w_{2n}\}$ where the first $n$ vertices belong to $F$. Then we denote by $V_j=\{1,\cdots,j, n+1,\cdots, n+j\},$ $S_{j}^{G_F}=\{z_k\}_{k\in V_j}$ and $S_{j}^{H_F}=\{w_k\}_{k\in V_j}$ (e.g. Fig.~\ref{fig: added vertex}). By Theorem \ref{GFHF} and \ref{addvertex}, we have that 
	\begin{equation}
	P_{G_{F, j}^v}(x)=P_{G_F}(x)\left( x- \sum_{i=1}^m \frac{\rho_i^2}{x-\lambda_i} \right),
	\end{equation}
	and 
	\begin{equation}
	P_{H_{F, j}^w}(x)=P_{H_F}(x)\left( x- \sum_{i=1}^m \frac{\sigma_i^2}{x-\lambda_i} - \sum_{i=1}^m \frac{\tau_i^2}{x+\lambda_i}\right),
	\end{equation}
	where $\rho_i=\|\sum_{k\in S_{G_F,j}} \hat{P}_i e_k\|,  \sigma_i=\|\sum_{k\in S_{H_F,j}} \widetilde{P}_i^+ e_k\|$ and $\tau_i=\|\sum_{k\in S_{H_F,j}} \widetilde{P}_i^- e_k\|.$ It follows that $\rho_i=\sigma_i$ and $\tau_i=0$ for any $1\leq i\leq m$. Since $P_{G_F}(x)=\prod_{i=1}^{m}(x-\lambda_i)^{2m_i}$ and $P_{H_F}(x)=\prod_{i=1}^{m}(x-\lambda_i)^{m_i}\prod_{i=1}^{m}(x+\lambda_i)^{m_i}$ we conclude that 
	\begin{equation*}
	P_{G_{F, j}^v}(x)=\prod_{i=1}^{m}(x-\lambda_i)^{2m_i-1}\left( x\prod_{i=1}^{m}(x-\lambda_i)- \sum_{i=1}^m \rho_i^2\prod_{j=1, j\neq i}^{m}(x-\lambda_j)\right)
	\end{equation*}
	and 
	\begin{equation*}
	P_{H_{F, j}^w}(x)=\prod_{i=1}^{m}(x-\lambda_i)^{m_i-1}\prod_{i=1}^{m}(x+\lambda_i)^{m_i}\left( x\prod_{i=1}^{m}(x-\lambda_i)- \sum_{i=1}^m \rho_i^2\prod_{j=1, j\neq i}^{m}(x-\lambda_j) \right).
	\end{equation*}
	
\end{proof}

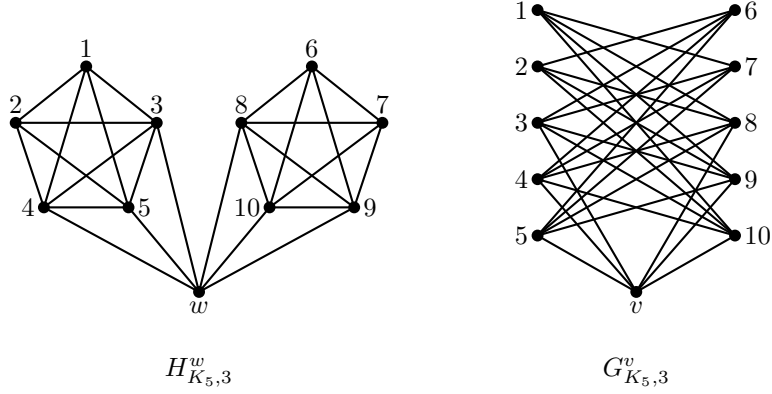
\begin{figure}
	\centering
	
	\begin{tikzpicture}[scale=0.75]
	
	\coordinate (w5) at (0.75,-1.5);
	\coordinate (w4) at (-0.75,-1.5);
	\coordinate (w2) at (-1.25,0); 
	\coordinate (w3) at (1.25,0); 
	\coordinate (w1) at (0,1);
	\coordinate (w6) at (4,1);
	\coordinate (w7) at (5.25,0); 
	\coordinate (w8) at (2.75,0); 
	\coordinate (w9) at (4.75,-1.5); 
	\coordinate (w10) at (3.25,-1.5);
	\coordinate (w11) at (2, -3);
	\coordinate (w12) at (2, -4);

	\foreach \punto in {1,...,11}
	\fill (w\punto) circle(3pt);
	
	\node [above] at (w1) {$1$};
	\node [above] at (w3) {$3$};
	\node [above] at (w2) {$2$};
	\node [left] at (w4) {$4$};
	\node [right] at (w5) {$5$};
	\node [above] at (w6) {$6$};
	\node [above] at (w7) {$7$};
	\node [above] at (w8) {$8$};
	\node [right] at (w9) {$9$};
	\node [left] at (w10) {$10$};
	\node [below] at (w11) {$w$};
	
	\node [below] at (w12) {$H_{K_5,3}^w$};
	
	\foreach \j in {1,...,5}
	\foreach \k in {\j,...,5}
	\draw[thick] (w\j)--(w\k);
	
	\foreach \j in {6,...,10}
	\foreach \k in {\j,...,10}
	\draw[thick] (w\j)--(w\k);

	\draw[thick] (w11)--(w3);
	\draw[thick] (w11)--(w5);
	\draw[thick] (w11)--(w4);
	\draw[thick] (w11)--(w10);
	\draw[thick] (w11)--(w8);
	\draw[thick] (w11)--(w9);
	
	%=============================================
	%================================================
	\coordinate (z1) at (8,2);
	\coordinate (z2) at (8,1); 
	\coordinate (z3) at (8,0); 
	\coordinate (z4) at (8,-1);
	\coordinate (z5) at (8,-2);
	\coordinate (z6) at (11.5, 2);
	\coordinate (z7) at (11.5,1);
	\coordinate (z8) at (11.5, 0); 
	\coordinate (z9) at (11.5,-1); 
	\coordinate (z10) at (11.5,-2);
	\coordinate (z11) at (9.75,-3);
	\coordinate (z12) at (9.75, -4);
	
	\foreach \punto in {1,...,11}
	\fill (z\punto) circle(3pt);
	
	\node [left] at (z1) {$1$};
	\node [left] at (z2) {$2$};
	\node [left] at (z3) {$3$};
	\node [left] at (z4) {$4$};
	\node [left] at (z5) {$5$};
	\node [right] at (z6) {$6$};
	\node [right] at (z7) {$7$};
	\node [right] at (z8) {$8$};
	\node [right] at (z9) {$9$};
	\node [right] at (z10) {$10$};
	\node[below] at (z11) {$v$};
	
	\node[below] at (z12) {$G_{K_5,3}^v$};

	\draw[thick] (z1)--(z7);
	\draw[thick] (z1)--(z8);
	\draw[thick] (z1)--(z9);
	\draw[thick] (z1)--(z10);
	\draw[thick] (z2)--(z8);
	\draw[thick] (z2)--(z9);
	\draw[thick] (z2)--(z10);
	\draw[thick] (z2)--(z6);
	\draw[thick] (z3)--(z6);
	\draw[thick] (z3)--(z7);
	\draw[thick] (z3)--(z10);
	\draw[thick] (z3)--(z9);
	\draw[thick] (z4)--(z10);
	\draw[thick] (z4)--(z8);
	\draw[thick] (z4)--(z6);
	\draw[thick] (z4)--(z7);
	\draw[thick] (z5)--(z6);
	\draw[thick] (z5)--(z7);
	\draw[thick] (z5)--(z8);
	\draw[thick] (z5)--(z9);
	\draw[thick] (z11)--(z3);
	\draw[thick] (z11)--(z4);
	\draw[thick] (z11)--(z5);
	\draw[thick] (z11)--(z10);
	\draw[thick] (z11)--(z9);
	\draw[thick] (z11)--(z8);
	
	\end{tikzpicture}
	\caption{In this example $F=K_5$, the vertices of the graphs $G_F$ and $H_F$ are labeled from $1$ to $10$ consecutive in each connected component of $G_F$ and each set of the bipartition of $H_F$ respectively, and $S=\{3,4,5,8,9,10\}$ is the neighbourhood of the new added vertex. Both graphs are \textbf{NSCS} graphs.}\label{fig: added vertex}
\end{figure}

\begin{corollary}\label{gnj}
	It holds that 
	$$
	\sigma(G_{K_n, j}^v)=\{[-(n-1)]^{1}, [-1]^{n-2}, [1]^{n-1}, \lambda_1, \lambda_2, \lambda_3\}
	$$
	and 
	$$
	\sigma(H_{K_n, j}^w)=\{[-1]^{2n-3}, [n-1]^{1}, \lambda_1, \lambda_2, \lambda_3\}, 
	$$	
	with $\lambda_1 >0, \lambda_3<0$ and 
	$$
	\lambda_2= \left\{ \begin{array}{lcc}
	>0 &   {\rm if}   & 1\leq j\leq n-2 \\
	0 &  {\rm if} & j=n-1 \\
	<0 &  {\rm if}  & j=n
	\end{array}
	\right.
	$$
	In particular, $G_{K_n,j}^v$ and $H_{K_n, j}^w$ are \textbf{NCSC}.
\end{corollary}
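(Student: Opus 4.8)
\section*{Proof proposal}

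The plan is to feed $F=K_n$ into Theorem~\ref{polHF} and then study, by elementary real analysis, the single factor that the theorem leaves implicit. Recall that $\sigma(K_n)=\{[n-1]^1,[-1]^{n-1}\}$, so $K_n$ has exactly two distinct eigenvalues, $\mu_1=n-1$ with multiplicity $1$ and $\mu_2=-1$ with multiplicity $n-1$. Combining Theorem~\ref{GFHF} (which gives $\sigma(G_F)=\{[-(n-1)]^1,[-1]^{n-1},[1]^{n-1},[n-1]^1\}$ and $\sigma(H_F)=\{[n-1]^2,[-1]^{2n-2}\}$, so that $G_F$ carries the eigenvalue $-(n-1)$ while $H_F$ does not) with Theorem~\ref{polHF}, the two explicit ``bulk'' products account exactly for the eigenvalues $-(n-1),-1,1$ with multiplicities $1,n-2,n-1$ in $\sigma(G_{K_n,j}^v)$, and for $-1,n-1$ with multiplicities $2n-3,1$ in $\sigma(H_{K_n,j}^w)$. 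The three remaining eigenvalues are, for both graphs, the roots of the common cubic factor
\[
q(x)=x(x-(n-1))(x+1)-\rho_1^2(x+1)-\rho_2^2(x-(n-1)),
\]
so the whole problem reduces to locating the three roots of $q$ and reading off their signs.

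First I would pin down the residues. For $K_n$ the spectral projections are $P_1=\tfrac1n J$ onto the all-ones line and $P_2=I-\tfrac1n J$, where $J$ is the all-ones matrix. With $S$ the index set $\{1,\dots,j\}\cup\{n+1,\dots,n+j\}$ and $\hat P_i=\mathrm{diag}(P_i,P_i)$ as in Theorem~\ref{polHF}, one computes $\sum_{k\in S}\hat P_i e_k=(P_iu_j,P_iu_j)$ where $u_j=\sum_{l=1}^{j}e_l$, so that $\rho_i^2=2\|P_iu_j\|^2$. A direct calculation gives $\rho_1^2=\tfrac{2j^2}{n}$, and since $P_1,P_2$ are complementary orthogonal projections, Pythagoras yields $\rho_2^2=2(\|u_j\|^2-\|P_1u_j\|^2)=\tfrac{2j(n-j)}{n}$.

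To localize the roots, for $1\le j\le n-1$ (where both residues are positive) I would introduce $g(x)=x-\tfrac{\rho_1^2}{x-(n-1)}-\tfrac{\rho_2^2}{x+1}$, whose zeros off the poles $x=n-1,-1$ are exactly the roots of $q$. Because $g'(x)>0$ and $g$ runs from $-\infty$ to $+\infty$ on each of $(-\infty,-1)$, $(-1,n-1)$ and $(n-1,\infty)$, there is exactly one root in each interval, giving $\lambda_3<-1<\lambda_2<n-1<\lambda_1$; in particular $\lambda_1>0$ and $\lambda_3<0$. Monotonicity of $g$ on $(-1,n-1)$ then forces $\lambda_2>0$, $\lambda_2=0$, or $\lambda_2<0$ according as $g(0)<0$, $g(0)=0$, or $g(0)>0$; and evaluating gives $g(0)=\tfrac{\rho_1^2}{n-1}-\rho_2^2=\tfrac{2j(j-n+1)}{n-1}$, which is negative for $1\le j\le n-2$ and zero for $j=n-1$, settling the first two cases. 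The value $j=n$ is not covered, because then $\rho_2=0$ and the pole at $-1$ disappears; here I would instead use the explicit factorization $q(x)=(x+1)\big(x^2-(n-1)x-2n\big)$, whose roots are $-1$ together with one root exceeding $n-1$ and one root below $-1$, so that $\lambda_1>0$ while $\lambda_2,\lambda_3<0$, matching the last case.

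Finally, the NCSC conclusion splits in two. Singular cospectrality is immediate from Theorem~\ref{polHF} ($K_n$ contains a triangle and so is nonbipartite for $n\ge3$), and is transparent from the two spectra, whose nonzero absolute values are $n-1$ once, $1$ with multiplicity $2n-3$, and $|\lambda_1|,|\lambda_2|,|\lambda_3|$ in both cases. For noncospectrality I would count positive eigenvalues: using $\lambda_1>0>\lambda_3$ together with the sign of $\lambda_2$, one finds $p(G_{K_n,j}^v)=n+1$ and $p(H_{K_n,j}^w)=3$ when $1\le j\le n-2$, and $p(G_{K_n,j}^v)=n$, $p(H_{K_n,j}^w)=2$ otherwise; in every case the two counts differ by $n-2\ge1$, so the graphs cannot be cospectral. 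The main obstacle is the middle step: carrying out the projection computation for $\rho_1,\rho_2$ correctly and recognizing that $g(0)$ (equivalently $q(0)$) is the single scalar whose sign flips exactly as $j$ passes through $n-1$. Once that is in place, the monotonicity argument and the inertia count are routine.
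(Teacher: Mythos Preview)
Your proposal is correct. The overall architecture matches the paper: specialize Theorem~\ref{polHF} to $F=K_n$, compute the residues $\rho_i^2$, and analyze the resulting cubic factor. Where you diverge is in the root-location step. The paper splits into three cases $j=n$, $j=n-1$, $1\le j\le n-2$; for the last it writes the cubic $Q_3(x)=x^3+(2-n)x^2-(n-1+2j)x+2j(n-1-j)$, uses $Q_3(0)>0$ with $\lim_{x\to-\infty}Q_3(x)=-\infty$ to force $\lambda_3<0$, and then invokes Descartes' rule of signs to get $\lambda_1,\lambda_2>0$. You instead treat all of $1\le j\le n-1$ at once via the rational function $g(x)=x-\rho_1^2/(x-(n-1))-\rho_2^2/(x+1)$, use monotonicity on the three open intervals to interlace the roots with the poles $-1,\,n-1$, and then read off the sign of the middle root from $g(0)=\tfrac{2j(j-n+1)}{n-1}$. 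Your interlacing argument is tidier in that it handles $j=n-1$ and $1\le j\le n-2$ uniformly and gives the finer localization $\lambda_3<-1<\lambda_2<n-1<\lambda_1$ for free; the paper's Descartes argument is shorter once one commits to three separate cases. You also spell out the residue computation via $P_1=\tfrac1nJ$, $P_2=I-\tfrac1nJ$ and Pythagoras, and add an explicit positive-inertia count ($p(G)=p(H)+(n-2)$) to certify noncospectrality, whereas the paper leaves that to the general assertion in Theorem~\ref{polHF}.
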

\begin{proof}
	
	%First of all, we label the vertices of both graphs as follows: $V(G_n)=\{z_1,\cdots, z_{2n}\}$ with $z_i=x_i$ and $z_{n+i}=y_i$, for $i\in \{1, \cdots, n\}$ and $V(H_n)=\{w_1,\cdots, w_{2n}\}$ where the first $n$ vertices belong to $K_n$. 

	Let us suppose that the eigenvalues of $G_{K_n}$ and $H_{K_n}$ are denoted by $\{\mu_1,\mu_2, \mu_3, \mu_4\}$ and $\{\nu_1, \mu_2\}$, respectively,   in increasing order and 
	\begin{equation}
	A_{G_{K_n}}=\sum_{r=1}^4 \mu_r P_r^{G_{K_n}}\qquad {\rm and} \qquad A_{H_{K_n}}=\sum_{s=1}^2 \nu_s P_s^{H_{K_n}},
	\end{equation}
	where $P_r^{G_{K_n}}$ and $P_s^{H_{K_n}}$ represent the orthogonal projection onto the eigenspace $E_{\mu_r}$  and $E_{\nu_s}$, respectively. 
	
	% An easy computation shows that
	%\begin{eqnarray}
	%A_{G_n}&=&-(n-1)\underbrace{\frac{1}{2n}\left(
	%\begin{matrix}
	% J_n& -J_n \\
	% \\
	%-J_n & J_n
	%\end{matrix}
	%\right)}_{P_1^{G_n}}+(-1)\underbrace{\frac{1}{2}\left(
	%\begin{matrix}
	%I_n-\frac 1n J_n& I_n-\frac 1n J_n \\
	%\\
	%I_n-\frac 1n J_n & I_n-\frac 1n J_n
	%\end{matrix}
	%\right)}_{P_2^{G_n}}\nonumber \\
	%&+& 1\underbrace{\frac{1}{2}\left(
	%	\begin{matrix}
	%	I_n-\frac 1n J_n& -(I_n-\frac 1n J_n) \\
	%	\\
	%	-(I_n-\frac 1n J_n) & I_n-\frac 1n J_n
	%	\end{matrix}
	%	\right)}_{P_3^{G_n}}+(n-1)\underbrace{\frac{1}{2n}\left(
	%	\begin{matrix}
	%	J_n& J_n \\
	%	\\
	%	J_n & J_n
	%	\end{matrix}
	%	\right)}_{P_4^{G_n}}
	%\end{eqnarray}
	%and 
	%\begin{eqnarray}
	%A_{H_n}&=&(-1)\underbrace{\left(
	%	\begin{matrix}
	%	I_n-\frac 1n J_n& 0 \\
	%	\\
	%	0 & I_n-\frac 1n J_n
	%	\end{matrix}
	%	\right)}_{P_1^{H_n}}+(n-1)\underbrace{\left(
	%	\begin{matrix}
	%	\frac 1n J_n& 0 \\
	%	\\
	%0 & \frac 1n J_n
	%	\end{matrix}
	%	\right)}_{P_2^{H_n}},
	%\end{eqnarray}
	%where $J_n$ is a matrix of ones of size $n.$
	
	For sake of simplicity, we  denote by $G_{K_n, j}$ and $H_{K_n, j}$ the graphs omitting in the notation the added vertex in each one of them. From Theorem  \ref{polHF}, for each $j\in \{1, 2, \cdots, n\}$ we obtain the characteristic polynomials of $G_{K_n,j}$ and $H_{K_n,j}$, respectively:

	\begin{equation}\label{polcarmod}
	\begin{split}
	P_{G_{K_n,j}}(x)=P_{G_{K_n}}(x)\left( x- \sum_{r=1}^4 \frac{\rho_r^2}{x-\mu_r} \right), \\  	P_{H_{K_n,j}}(x)=P_{H_{K_n}}(x)\left( x- \sum_{s=1}^2 \frac{\sigma_s^2}{x-\nu_s} \right), 
	\end{split}
	\end{equation}
	where $\rho_r=\|\sum_{k\in S_j^{G_{K_n}}}P_r^{G_{K_n}}e_k\|$,  $\sigma_s=\|\sum_{k\in S_j^{H_{K_n}}}P_s^{H_{K_n}}e_k\|$ and $S_j^{G_{K_n}}=S_j^{H_{K_n}}=\{1,\cdots, j, n+1, \cdots, n+j\}.$
	
	For the sake of clarity, given a matrix $B\in \mathbb{R}^{n \times n}$ and $k\in \{1, \cdots, n\}$ we denote by  $c_k(B)\in \mathbb{R}^n$ the  $k^{\rm th}$ column of  $B$. Then, 
	\begin{enumerate}
		\item $\rho_1=\|\sum_{k\in S_j^{G_{K_n}}}P_1^{G_{K_n}}e_k\|=\|\sum_{k\in S_j^{G_{K_n}}}c_k(P_1^{G_{K_n}})\|=0.$
		\item $\rho_2^2=\|\sum_{k\in S_j^{G_{K_n}}}P_2^{G_{K_n}}e_k\|^2=\|\sum_{k\in S_j^{G_{K_n}}}c_k(P_2^{G_{K_n}})\|^2=\frac{2j(n-j)}{n}.$
		\item $\rho_3=\|\sum_{k\in S_j^{G_{K_n}}}P_3^{G_{K_n}}e_k\|=\|\sum_{k\in S_j^{G_{K_n}}}c_k(P_3^{G_{K_n}})\|=0.$
		\item $\rho_4=\|\sum_{k\in S_j^{G_{K_n}}}P_4^{G_{K_n}}e_k\|=\|\sum_{k\in S_j^{G_{K_n}}}c_k(P_4^{G_{K_n}})\|=\frac{j}{n}\| c_1(J_{2n})\|=\frac{j}{n}(2n)^{1/2}.$
		\item $\sigma_1^2=\|\sum_{k\in S_j^{H_{K_n}}}P_1^{H_{K_n}}e_k\|^2=\|\sum_{k\in S_j^{H_{K_n}}}c_k(P_1^{H_{K_n}})\|^2=\frac{2j(n-j)}{n}.$
		\item $\sigma_2=\|\sum_{k\in S_j^{H_{K_n}}}P_2^{H_{K_n}}e_k\|=\|\sum_{k\in S_j^{H_{K_n}}}c_k(P_2^{H_{K_n}})\|=\frac{j}{n}\| c_1(J_{2n})\|=\frac{j}{n}(2n)^{1/2}.$
	\end{enumerate}
	
	%Substituting the previous six identites  into \eqref{polcarmod} we can assert that 

	Now we have to consider three possible cases.
	\begin{enumerate}
		\item {\bf Case 1:} $j=n.$
		
		Then from \eqref{polcarmod} we can assert that:
		\begin{eqnarray}
		P_{G_{K_n,j}}(x)&=&P_{G_{K_n}}(x)\left( x- \frac{2n}{x-(n-1)} \right)\nonumber\\
		&=&\frac{P_{G_{K_n}}(x)}{(x-(n-1))}(x^2-(n-1)x-2n),
		\end{eqnarray}
		and 
		\begin{eqnarray}
		P_{H_{K_n,j}}(x)=\frac{P_{H_{K_n}}(x)}{(x-(n-1))}(x^2-(n-1)x-2n).
		\end{eqnarray}
		Let $Q_1(x)=(x^2-(n-1)x-2n)$ and $\lambda_1, \lambda_3$ its roots, it is easily seen that $\lambda_1\lambda_3<0$. Finally,  letting $\lambda_2=-1<0$ 	we conclude this part of the proof. 
		\item {\bf Case 2:} $j=n-1.$
		
		In this case from \eqref{polcarmod} we obtain that the characteristic polynomials have the following expressions: 
		
		\begin{eqnarray}
		P_{G_{K_n,j}}(x)&=&\frac{P_{G_{K_n}}(x)}{(x-(n-1))(x+1)}x\left(nx^2+n(2-n) x- (3n^2-7n+4) \right)\nonumber\,
		\end{eqnarray}
		and 
		\begin{eqnarray}
		P_{H_{K_n,j}}(x)&=&\frac{P_{H_{K_n}}(x)}{(x-(n-1))(x+1)}x\left(nx^2+n(2-n) x- (3n^2-7n+4) \right).\nonumber\
		\end{eqnarray}
		Let $Q_2(x)=x\left(nx^2+n(2-n) x- (3n^2-7n+4) \right)$ and $\lambda_1, \lambda_2, \lambda_3$ its roots with $\lambda_2=0$.  It is easily seen that $\lambda_1\lambda_3<0$ and this concludes the proof in  this case. 
		\item {\bf Case 3:} $1\leq j \leq n-2.$
		
		We have that 
		\begin{eqnarray}
		P_{G_{K_n,j}}(x)&=&\frac{P_{G_{K_n}}(x)}{(x-(n-1))(x+1)}Q_3(x),\nonumber\
		\end{eqnarray}
		and 
		\begin{eqnarray}
		P_{H_{K_n,j}}(x)&=&\frac{P_{H_{K_n}}(x)}{(x-(n-1))(x+1)}Q_3(x), \nonumber\
		\end{eqnarray}
		where	$Q_3(x)=x^3+(2-n) x^2- (n-1+2j)x+2j((n-1)-j).$ 
		
		We denote by $\lambda_1, \lambda_2, \lambda_3$ the real roots of $Q_3$ such that $\lambda_3\leq \lambda_2\leq \lambda_1.$ We know that $\lambda_3<0$, since $\lim\limits_{x\to -\infty} Q_3(x)=-\infty$ and $Q_3(0)=2j((n-1)-j)>0.$ On the other hand, by  Descartes' rule of signs, we conclude that $\lambda_2$ and $\lambda_1$ are positive real numbers and  this finishes the proof.

	\end{enumerate}
\end{proof}
\begin{remark}
	From the previous proof we obtain that 
	\begin{equation}\label{polcargnj}
	P_{G_{K_n,j}}(x)=(x-1)^{n-1}(x+1)^{n-2}(x+(n-1))Q_{n, j}(x)
	\end{equation}
	and 
	\begin{equation}\label{polcarhnj}
	P_{H_{K_n,j}}(x)=(x-1)^{2n-3}(x-(n-1))Q_{n, j}(x), 
	\end{equation}
	with $Q_{n, j}(x)=x(x+1)(x-(n-1))-2j(x-(n-1-j)).$
\end{remark}

Next, we present a generalization of the previous construction via coalescence operation between two graphs. If  $G$ and $H$ are two graphs,  $g\in V(G)$ and $h\in V(H)$,  the \emph{coalescence} between $G$ and $H$ at $g$ and $h$, denoted   $G\cdot H (g,h:v_{g,h})$ or $G\cdot_{g}^h H$, is the graph obtained from $G$ and $H$, by identifying vertices $g$ and $h$ (see Fig.~\ref{fig: coalescence}).  We use $G\cdot H$ for short.

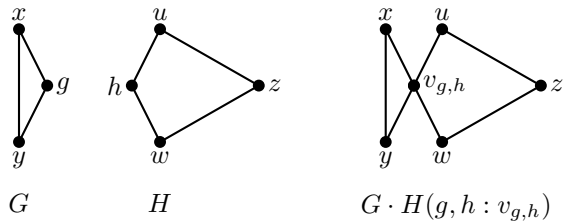
\begin{figure}
	\centering
	\begin{tikzpicture}[scale=0.75]
	
	\coordinate (w5) at (-0.75,0);
	\coordinate (w4) at (0.75,0);
	\coordinate (w2) at (1.25,1); 
	\coordinate (w3) at (-1.25,1); 
	%\coordinate (w1) at (0,2);
	\coordinate (w6) at (1.25,-1);
	\coordinate (w7) at (-1.25,-1);
	\coordinate (w8) at (3,0);
	\coordinate (w9) at (-1.25,-1.75);
	\coordinate (w10) at (1.25,-1.75);
	
	\foreach \punto in {2,...,8}
	\fill (w\punto) circle(3pt);
	
	%	\node [above] at (w1) {$1$};
	\node [above] at (w3) {$x$};
	\node [above] at (w2) {$u$};
	\node [left] at (w4) {$h$};
	\node [right] at (w5) {$g$};
	\node [below] at (w6) {$w$};
	\node [below] at (w7) {$y$};
	\node [right] at (w8) {$z$};
	
	\node [below] at (w9) {$G$};
	\node [below] at (w10) {$H$};
	%	
	%	\draw[thick] (y1)--(y2);
	%	\draw[thick] (y2)--(y3);
	%	\draw[thick] (y3)--(y4);
	%	\draw[thick] (y4)--(y5);
	%	\draw[thick] (y5)--(y6);
	\draw[thick] (w3)--(w7);
	\draw[thick] (w3)--(w5);
	\draw[thick] (w5)--(w7);
	\draw[thick] (w2)--(w4);
	\draw[thick] (w4)--(w6);
	\draw[thick] (w6)--(w8);
	\draw[thick] (w2)--(w8);
	%	
	%	\draw[thick] (y7)--(y8);
	%	%-------------------------------
	%	
	
	\coordinate (y5) at (5.75,0);
	\coordinate (y4) at (5.75,0);
	\coordinate (y2) at (6.25,1); 
	\coordinate (y3) at (5.25,1); 
	
	\coordinate (y6) at (6.25,-1);
	\coordinate (y7) at (5.25,-1);
	\coordinate (y8) at (8,0);
	\coordinate (y9) at (6.5,-1.75);

	\foreach \punto in {2,...,8}
	\fill (y\punto) circle(3pt);

	%	\node [above] at (y1) {$1$};
	\node [above] at (y3) {$x$};
	\node [above] at (y2) {$u$};
	\node [right] at (y4) {$v_{g, h}$};
	\node [below] at (y6) {$w$};
	\node [below] at (y7) {$y$};
	\node [right] at (y8) {$z$};
	
	\node [below] at (y9) {$G\cdot H (g,h:v_{g,h})$};

	\draw[thick] (y7)--(y3);
	\draw[thick] (y3)--(y4);
	\draw[thick] (y4)--(y7);
	\draw[thick] (y2)--(y4);
	\draw[thick] (y2)--(y8);
	\draw[thick] (y6)--(y8);
	\draw[thick] (y4)--(y6);

	\end{tikzpicture}
	\caption{The coalescence of graphs $G$ and $H$ at vertices $g$ and $h$.}
	\label{fig: coalescence}
	
\end{figure}

In the 70s Schwenk published an article containing useful formulas for the characteristic polynomial of a graph~\cite{Schwenk1974}. The  main result of this research is based on the following Schwenk's formula, linking the characteristic polynomial of two graphs and the coalescence between them. More details on next result can be found in~\cite{CRS-1997}.

\begin{lemma}\cite{Schwenk1974}\label{lem: Schwenk}
	Let $G$ and $H$ be two graphs. If $g\in V(G)$, $h\in V(H)$, and $F=G\cdot H$, then
	\[P_F(x)=P_G(x)P_{H-h}(x)+P_{G-g}(x)P_H(x)-xP_{G-g}(x)P_{H-h}(x).\]  
\end{lemma}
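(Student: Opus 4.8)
The plan is to prove the identity combinatorially via Sachs' coefficient theorem, which expresses the characteristic polynomial of any graph $K$ on $N$ vertices as a signed generating function over its \emph{elementary (Sachs) subgraphs}: subgraphs, not necessarily spanning, each of whose connected components is a single edge or a cycle. Writing such a subgraph $U$ with $p(U)$ components, of which $c(U)$ are cycles, covering $|U|$ vertices, the classical formula reads
\[
P_K(x)=\sum_{U}(-1)^{p(U)}2^{c(U)}\,x^{\,N-|U|},
\]
the sum ranging over all elementary subgraphs, including the empty one. First I would rewrite each of $P_F$, $P_G$, $P_H$, $P_{G-g}$ and $P_{H-h}$ in this form and then match the two sides of the asserted identity term by term.

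The key structural observation is that the coalesced vertex $v_{g,h}$ is a cut vertex of $F=G\cdot H$ separating the vertices of $G-g$ from those of $H-h$. Every edge of $F$ lies entirely inside the copy of $G$ or entirely inside the copy of $H$, and --- crucially --- any cycle through $v_{g,h}$ uses both of its incident edges on the same side, since there is no $v_{g,h}$-avoiding path joining $G-g$ to $H-h$. Hence each connected component of any elementary subgraph $U$ of $F$ lies entirely on the $G$-side (viewing $v_{g,h}$ as $g$) or entirely on the $H$-side (viewing $v_{g,h}$ as $h$), so I may split $U=U_G\cup U_H$. This lets me partition all elementary subgraphs of $F$ into three classes according to the role of $v_{g,h}$: (A) $v_{g,h}$ is uncovered, so $U_G$ is an elementary subgraph of $G-g$ and $U_H$ one of $H-h$; (B) $v_{g,h}$ is covered by a $G$-side component, so $U_G$ is an elementary subgraph of $G$ covering $g$ while $U_H\subseteq H-h$; (C) the symmetric case with the roles of $G$ and $H$ exchanged.

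Next I would sum the contributions of each class, tracking powers of $x$ carefully: with $N=|V(G)|+|V(H)|-1$, each uncovered vertex contributes one factor of $x$, and in Case~(A) the exponent factors as $x\cdot x^{(|V(G)|-1)-|U_G|}\cdot x^{(|V(H)|-1)-|U_H|}$, so that class sums to $x\,P_{G-g}(x)P_{H-h}(x)$. For the classes where $v_{g,h}$ is covered I would use the bookkeeping identity
\[
\sum_{U:\,g\in U}(-1)^{p(U)}2^{c(U)}x^{\,|V(G)|-|U|}=P_G(x)-x\,P_{G-g}(x),
\]
which isolates the elementary subgraphs of $G$ through $g$; this yields $\bigl(P_G(x)-xP_{G-g}(x)\bigr)P_{H-h}(x)$ for Case~(B) and the mirror expression $P_{G-g}(x)\bigl(P_H(x)-xP_{H-h}(x)\bigr)$ for Case~(C). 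Adding the three contributions, the three copies of $\pm x\,P_{G-g}P_{H-h}$ collapse to a single $-x\,P_{G-g}P_{H-h}$, leaving exactly $P_G P_{H-h}+P_{G-g}P_H-x\,P_{G-g}P_{H-h}$, as claimed. I expect the main obstacle to be the structural step: rigorously ruling out an elementary-subgraph cycle that crosses the coalescence vertex (the cut-vertex argument) and then matching the $x$-exponents so that each class factors cleanly into the stated products; once these are in place, the final algebra is immediate. An alternative route via a cofactor expansion of $\det(xI-A_F)$ along the row indexed by $v_{g,h}$, or via the single-vertex deletion recurrence, would also work, but the Sachs-subgraph count keeps the bookkeeping most transparent.
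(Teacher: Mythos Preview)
Your argument is correct. The paper itself does not prove this lemma: it is quoted from Schwenk's 1974 paper and stated without proof, so there is no ``paper's own proof'' to compare against beyond the citation. Your Sachs-subgraph approach is one of the standard ways to establish this identity, and every step checks out: the cut-vertex observation that no elementary-subgraph component can straddle $v_{g,h}$ is exactly right (any cycle through $v_{g,h}$ would need a $v_{g,h}$-avoiding path between $G-g$ and $H-h$, which does not exist), the trichotomy (A)/(B)/(C) is exhaustive and disjoint, the bookkeeping identity $\sum_{U:\,g\in U}(-1)^{p(U)}2^{c(U)}x^{|V(G)|-|U|}=P_G(x)-xP_{G-g}(x)$ is immediate from Sachs' theorem applied to $G$ and to $G-g$, and the final collapse of the three $\pm x\,P_{G-g}P_{H-h}$ terms is just arithmetic. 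The cofactor-expansion route you mention at the end is closer in spirit to how Schwenk originally derived such reduction formulas, but your combinatorial version is equally valid and arguably more transparent.
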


\begin{remark}
	Combining Proposition \ref{scnocoe} and Theorem \ref{addvertex}, we obtain a pair of graphs singulary cospectral but not (almost) cospectral via coalescence. More precisely, 	let  $G_{K_n, j}^v, H_{K_n, j}^w$ with $n\geq 3$ and  $1\leq j\leq n$   as above.  For each $k\geq 1$, the graph  $G_{K_n, j, k}^v$  is  defined inductively as follows:  $ G_{K_n, j, 1}^{v_1}:=G_{K_n, j}^{v}$  and $G_{K_n, j, k}^{v_{k}}= G_{K_n, j, k-1}^{v_{k-1}}\cdot G_{K_n, j}^{\widetilde{v}_{k-1}}(v_{k-1}, \widetilde{v}_{k-1}:v_k)$ for $k\geq 2.$ Analogously, we consider the graphs  $H_{K_n, j, 1}^{w_1}:=H_{K_n, j}^{w}$ and  $H_{K_n, j, k}^{w_k}= H_{K_n, j, k-1}^{w_{k-1}}\cdot H_{K_n, j}^{\widetilde{w}_{k-1}} (w_{k-1}, \widetilde{w}_{k-1}:w_k).$  Then $G_{K_n,j, k}^{v_k}$ and $H_{K_n, j, k}^{w_k}$ are \textbf{NCSC}.
\end{remark}

In Remark~\ref{scnocoe} we present families of pairs of regular graphs which are singularly cospectral. To finish the section we will show that if we have a pair of singularly cospectral graphs, on the same number of vertices, and one of them is regular, then the other one is also regular.   

\begin{proposition}\label{igual_max_autovalor}
	Let $G, H$ be  two graphs   such that $G$ is a $d$-regular graph,   $\lambda_1(G)=\lambda_1(H)$ and $d=\frac{2|E(H)|}{|V(H)|}$ then
	$H$ is a $d$-regular.

\end{proposition}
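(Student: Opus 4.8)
The plan is to reduce everything to the equality case of item (2) of Proposition~\ref{propiedades_basicas}, which states that $\lambda_1(H)\geq \frac{2|E(H)|}{|V(H)|}$ with equality if and only if $H$ is $\frac{2|E(H)|}{|V(H)|}$-regular. The key observation is that the two hypotheses together are engineered precisely so that this equality is forced for $H$.

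First I would compute $\lambda_1(G)$. Since $G$ is $d$-regular, its average degree is $\frac{2|E(G)|}{|V(G)|}=d$, and by the equality statement in item (2) of Proposition~\ref{propiedades_basicas} (a regular graph attains the bound) we get $\lambda_1(G)=d$. Combining this with the hypothesis $\lambda_1(G)=\lambda_1(H)$ yields $\lambda_1(H)=d$.

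Next I would use the remaining hypothesis $d=\frac{2|E(H)|}{|V(H)|}$ to substitute and obtain the chain
\[
\lambda_1(H)=d=\frac{2|E(H)|}{|V(H)|}.
\]
Thus $H$ attains equality in the bound $\lambda_1(H)\geq \frac{2|E(H)|}{|V(H)|}$ of item (2). Applying the equality characterization of that item to $H$, we conclude that $H$ is $\frac{2|E(H)|}{|V(H)|}$-regular, that is, $d$-regular, which is exactly the claim.

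There is essentially no obstacle here once the equality case of Proposition~\ref{propiedades_basicas}(2) is invoked in both directions: first to read off $\lambda_1(G)=d$ from the regularity of $G$, and then to deduce the regularity of $H$ from the fact that its largest eigenvalue meets its average degree. The only point requiring a line of justification is the very first step, where one must note that for a $d$-regular graph the average degree $\frac{2|E(G)|}{|V(G)|}$ is literally $d$, so that equality (and hence $\lambda_1(G)=d$) holds automatically.
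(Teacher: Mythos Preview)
Your proof is correct and follows essentially the same route as the paper's: both apply item~(2) of Proposition~\ref{propiedades_basicas} first to $G$ (to obtain $\lambda_1(G)=d$ from regularity) and then to $H$ (to obtain regularity of $H$ from the forced equality $\lambda_1(H)=\frac{2|E(H)|}{|V(H)|}$). There is no meaningful difference in strategy or detail.
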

\begin{proof}
	We denote by $n=|V(G)|$ and $m=|E(G)|.$ 	As $G$ is a regular graph, by Proposition \ref{propiedades_basicas} we have that  $$d=\frac{2m}{n}=\lambda_1(G)=\lambda_1(H).$$ 
	Then,  by Proposition \ref{propiedades_basicas} we can conclude that $H$ is a $d$-regular graph.
\end{proof}

\begin{corollary}
	Let $G, H$ be two graphs with the same numbers of vertices, such that $G$ is a $d$-regular graph and $H$ a non-regular one. Then $G$  cannot  be singularly cospectral with $H$.
\end{corollary}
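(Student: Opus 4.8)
The plan is to argue by contradiction and reduce the statement directly to the already-established Proposition~\ref{igual_max_autovalor}. Suppose, contrary to the claim, that $G$ and $H$ are singularly cospectral. My goal is to verify that all three hypotheses of Proposition~\ref{igual_max_autovalor} hold for the pair $(G,H)$, and then invoke that proposition to conclude that $H$ is $d$-regular, contradicting the hypothesis that $H$ is non-regular.

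First I would record that $G$ and $H$ have the same number of edges. This is immediate from item~(1) of Lemma~\ref{prop_nec}, which guarantees that singularly cospectral graphs have equal edge counts. Combining this with the assumption $|V(G)|=|V(H)|=:n$, together with the elementary fact that a $d$-regular graph on $n$ vertices satisfies $\frac{2|E(G)|}{n}=d$, one obtains $d=\frac{2|E(H)|}{|V(H)|}$, which is precisely the third hypothesis of Proposition~\ref{igual_max_autovalor}.

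Next I would establish that $\lambda_1(G)=\lambda_1(H)$. The key observation is that for any graph the singular values are exactly the absolute values of the eigenvalues, so the largest singular value coincides with the spectral radius; and by item~(1) of Proposition~\ref{propiedades_basicas} the spectral radius of a graph is $\lambda_1$. Hence $s_1(G)=\lambda_1(G)$ and $s_1(H)=\lambda_1(H)$. Since singular cospectrality forces the largest singular values to agree, $s_1(G)=s_1(H)$, and therefore $\lambda_1(G)=\lambda_1(H)$, which is the remaining hypothesis.

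With the three hypotheses of Proposition~\ref{igual_max_autovalor} in hand (namely $G$ is $d$-regular, $\lambda_1(G)=\lambda_1(H)$, and $d=\frac{2|E(H)|}{|V(H)|}$), that proposition applies and yields that $H$ is $d$-regular, contradicting the assumption that $H$ is non-regular. I do not anticipate a genuine obstacle in this argument; it is essentially a packaging of Lemma~\ref{prop_nec}(1) and Proposition~\ref{igual_max_autovalor}. The only step deserving a moment's care is the identification of the largest singular value with $\lambda_1$, which relies on the nonnegativity and symmetry of the adjacency matrix as encoded in Proposition~\ref{propiedades_basicas}(1).
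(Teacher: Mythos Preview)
Your proposal is correct and follows essentially the same route as the paper's own proof: assume singular cospectrality, use Lemma~\ref{prop_nec}(1) to get $|E(G)|=|E(H)|$, use $s_1(G)=s_1(H)$ together with Proposition~\ref{propiedades_basicas}(1) to get $\lambda_1(G)=\lambda_1(H)$, and then apply Proposition~\ref{igual_max_autovalor} to force $H$ to be $d$-regular, a contradiction. The only cosmetic difference is that the paper writes the chain $d=\tfrac{2|E(H)|}{|V(H)|}=\lambda_1(G)=s_1(G)=s_1(H)=\lambda_1(H)$ in a single line rather than separating the verification of the hypotheses as you do.
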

\begin{proof}
	As in the previous stamenent we denote by $n=|V(G)|=|V(H)|$ and $m=|E(G)|.$ Suppose that $G$ and $H$ are singularly cospectral, then from Proposition \ref{prop_nec} and \ref{propiedades_basicas} we have $m=|E(H)|$ and 
	$$d=\frac{2|E(H)|}{|V(H)|}=\lambda_1(G)=s_1(G)=s_1(H)=\lambda_1(H),$$
	thus $H$ is a $d$-regular graph as consequence of Proposition \ref{igual_max_autovalor}. But this contradict our hypothesis and conclude the proof.
\end{proof}

\section{Families of graphs where singularly cospectral implies almost cospectral}~\label{sec: c implies sc}

Since for two graphs to be singularly coespectral is not necessary they to have the same number of vertices, the notion of almost cospectral is nearer to singularly cospectral than cospectral. Two graphs are {\em almost cospectral} if their nonzero eigenvalues (and their multiplicities) coincide. The connected components of $P_3\times P_3$, $C_4$ and $K_{1,4}$ are almost cospectral, see \cite[Theorem 3.16]{CvetkovicDDGT88}. For more details and results in connection with almost cospectral graphs we referred to the reader to~\cite{BeinekeLW04} and the references therein. Notice that if two almost cospectral graphs having the same number of vertices, then they are cospectral. Hence, in Section~\ref{sec: construction sc noncospectral}, we have presented constructions of singularly cospectral graphs which are not almost cospectral. This section is devoted to present families of graphs where the notion of singularly cospectral and cospectral are equivalent, namely, bipartite graphs (Theorem~\ref{scbipartito}), connected graphs having maximum singular value with multiplicity at least two (Theorem~\ref{mayoroiguala2}), and connected graphs having the same inertia (Theorem~\ref{thm: inertia}).

\begin{theorem}\label{scbipartito}
	Let $G, H$ be two bipartite graphs and singularly cospectral then they are almost cospectral.
\end{theorem}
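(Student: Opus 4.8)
The plan is to turn the hypothesis on singular values back into a statement about eigenvalues by exploiting that a bipartite spectrum is symmetric about zero. Recall that for a graph the singular values of $A_G$ are exactly the absolute values of its eigenvalues, so singular cospectrality is really a statement about the multiset of absolute values of nonzero eigenvalues; the extra structure available for bipartite graphs (Proposition~\ref{propiedades_basicas}, item 4) is what lets us recover the signed eigenvalues.

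First I would record the eigenvalue bookkeeping. Let the distinct positive eigenvalues of $G$ be $\lambda_1>\cdots>\lambda_p>0$ with multiplicities $m_1,\ldots,m_p$. Since $G$ is bipartite, $\sigma(G)$ is symmetric with respect to zero, so each $-\lambda_i$ is also an eigenvalue of $G$ with the same multiplicity $m_i$. Because the nonzero singular value $\lambda_i$ is contributed by both $+\lambda_i$ and $-\lambda_i$, it occurs among the singular values with multiplicity $2m_i$. The analogous description holds for $H$: writing its distinct positive eigenvalues as $\mu_1>\cdots>\mu_q>0$ with multiplicities $n_1,\ldots,n_q$, the nonzero singular value $\mu_j$ of $H$ occurs with multiplicity $2n_j$.

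Next I would feed in the hypothesis. Singular cospectrality of $G$ and $H$ means their nonzero singular values, counted with multiplicity, coincide. Comparing the two descriptions above forces $p=q$, then $\lambda_i=\mu_i$ for all $i$ (the distinct positive values must agree when listed in decreasing order), and finally $2m_i=2n_i$, i.e. $m_i=n_i$ for every $i$. Thus the positive eigenvalues of $G$ and $H$ agree together with their multiplicities. Invoking once more the symmetry of both spectra about zero, the negative eigenvalues and their multiplicities agree as well, so the entire multiset of nonzero eigenvalues of $G$ coincides with that of $H$; this is exactly the definition of almost cospectral.

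I do not expect a genuine obstacle here: the argument is essentially bookkeeping, and the single structural ingredient is the zero-symmetry of the bipartite spectrum. The one point requiring care is the factor of two: for a bipartite graph each nonzero singular value multiplicity is \emph{twice} the corresponding eigenvalue multiplicity, and it is precisely the cancellation of this common factor when equating the singular multiplicities of $G$ and $H$ that yields equality of the eigenvalue multiplicities.
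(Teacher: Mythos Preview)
Your proof is correct and follows essentially the same route as the paper: both use the symmetry of a bipartite spectrum about zero to see that each nonzero singular value is a positive eigenvalue counted with doubled multiplicity, then match these across $G$ and $H$. The only difference is cosmetic---you organize the argument via distinct positive eigenvalues with multiplicities, while the paper lists all nonzero eigenvalues with repetition.
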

\begin{proof}
	Let $\lambda_1\geq \lambda_2\geq \cdots\geq\lambda_n$ and $\mu_1\geq \mu_2\geq \cdots\geq \mu_m$ be the non-zero eigenvalues of $G$ and $H$,
	respectively.  As $G$ and $H$ are singularly cospectral then $n=m$ and  the spectrum of a bipartite graph is symmetric about $0$  then $n$ is even. It follows that, for any $1\leq i\leq \frac{n}{2}$, it holds  
	$$\lambda_i=-\lambda_{n-i}\qquad {\rm and} \qquad 	\mu_i=-\mu_{n-i}.$$ Since the singular values of a symmetric matrix are the
	absolute values of its nonzero eigenvalues, then 
	$$\lambda_1, \lambda_1, \lambda_2, \lambda_2, \cdots, \lambda_{\frac{n}{2}}, \lambda_{\frac{n}{2}}
	$$
	and 
	$$\mu_1, \mu_1, \mu_2, \mu_2, \cdots, \mu_{\frac{n}{2}}, \mu_{\frac{n}{2}},
	$$
	are the singular values of $G$ and $H$, respectively. This shows that the graphs are almost cospectral.
\end{proof}

\begin{theorem}\label{mayoroiguala2}
	Let $G, H$ be two connected  and singularly cospectral graphs such that its largest singular value has multiplicity greater or  equal to  2. Then, $G$ and $H$ are almost cospectral 
\end{theorem}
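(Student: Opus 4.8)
The plan is to reduce this statement to the bipartite case already settled in Theorem~\ref{scbipartito}. The key observation is that, for a connected graph, the largest singular value coincides with the spectral radius, and the hypothesis on its multiplicity forces bipartiteness. So the argument will be: show that the multiplicity condition makes each of $G$ and $H$ bipartite, and then invoke the previous theorem.

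First I would recall that the singular values of the symmetric matrix $A_G$ are the absolute values of its eigenvalues, so the largest singular value $s_1(G)$ equals $\rho(G)=\lambda_1(G)$ by Proposition~\ref{propiedades_basicas}(1). For a connected graph the Perron--Frobenius theorem guarantees that $\lambda_1(G)$ is a simple eigenvalue of $A_G$. Consequently, the multiplicity of $s_1(G)$ as a singular value equals $1$ plus the multiplicity of $-\lambda_1(G)$ as an eigenvalue of $A_G$, since these are the only eigenvalues whose absolute value attains $\lambda_1(G)$.

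Next I would use the multiplicity hypothesis: because $s_1(G)$ has multiplicity at least $2$, the value $-\lambda_1(G)$ must occur as an eigenvalue, i.e.\ $\lambda_r(G)=-\lambda_1(G)$. By Proposition~\ref{propiedades_basicas}(3), a connected graph satisfies this equality precisely when it is bipartite, so $G$ is bipartite. Since $G$ and $H$ are singularly cospectral they share the same nonzero singular values with identical multiplicities, so the largest singular value of $H$ also has multiplicity at least $2$; running the same argument for the connected graph $H$ shows that $H$ is bipartite as well.

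Finally, with both $G$ and $H$ bipartite and singularly cospectral, Theorem~\ref{scbipartito} immediately yields that $G$ and $H$ are almost cospectral, which completes the proof. The step requiring the most care is this reduction: one must verify that, thanks to the simplicity of the Perron eigenvalue in a connected graph, a multiplicity of at least $2$ for the top singular value is \emph{equivalent} to $-\lambda_1$ being an eigenvalue, and hence to bipartiteness; once that equivalence is in hand the rest is an appeal to the earlier bipartite result.
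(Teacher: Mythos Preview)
Your proof is correct and follows essentially the same route as the paper: use simplicity of the Perron eigenvalue in a connected graph to force $-\lambda_1$ into the spectrum when $s_1$ has multiplicity at least~$2$, deduce bipartiteness via Proposition~\ref{propiedades_basicas}(3) for both $G$ and $H$, and then invoke Theorem~\ref{scbipartito}. If anything, your write-up is slightly more explicit than the paper's in justifying why the multiplicity hypothesis transfers from $G$ to $H$.
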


\begin{proof}
	Let $s_1> s_2>\cdots> s_k$  be the non-zero  singular values of $G$ with multiplicity $m_1,m_2,\cdots, m_k$, respectively. 	If we denote by $\lambda_1(G)$ and $\mu_1(H)$ the largest eigenvalue of $G$ and $H$, respectively, then $s_1=\lambda_1(G)=\mu_1(H)$. On the other hand, by the connectivity of both graphs such eigenvalues are simple. As by hypothesis, $s_1$ has multiplicity greater or  equal to  2, then  the smallest eigenvalue of $G$ and $H$ is equal to $-\lambda_1(G).$ So by Proposition \ref{propiedades_basicas} and Theorem \ref{scbipartito}, we conclude that both graphs are almost cospectral.
	
\end{proof}

\begin{remark}
	The connectivity condition can not be dropped as hypothesis from~Theorem~\ref{mayoroiguala2}. See Fig.~\ref{fig: connectivity theorem 5.2} for an example of two graphs, one of them disconnected which are singularly cospectral but not almost cospectral.
\end{remark}	

%	\begin{example}

\begin{figure}
	\centering
	\begin{tikzpicture}[scale=0.75]
	\coordinate (n1) at (-4,0);
	\coordinate (n2) at (-3,1); 
	\coordinate (n3) at (-4,2);
	\coordinate (n4) at (-5,1);
	\coordinate (n5) at (-2,1);
	\coordinate (n6) at (-1,0);
	\coordinate (n7) at (0,1);
	\coordinate (n8) at (-1,2);
	\coordinate (n9) at (-2.5, -1);
	
	\foreach \punto in {1,...,8}
	\fill (n\punto) circle(3pt);
	\node [below] at (n1) {$1$};
	\node [above] at (n3) {$3$};
	\node [above] at (n2) {$2$};
	\node [above] at (n4) {$4$};
	\node [above] at (n5) {$5$};
	\node [below] at (n6) {$6$};
	\node [above] at (n7) {$7$};
	\node [above] at (n8) {$8$};
	\node [] at (n9) {$H_1$};
	
	\draw[thick] (n1)--(n2);
	\draw[thick] (n1)--(n3);
	\draw[thick] (n2)--(n3);
	\draw[thick] (n2)--(n4);
	\draw[thick] (n3)--(n4);
	\draw[thick] (n4)--(n1);
	
	\draw[thick] (n5)--(n6);
	\draw[thick] (n5)--(n7);
	\draw[thick] (n6)--(n7);
	\draw[thick] (n6)--(n8);
	\draw[thick] (n7)--(n8);
	\draw[thick] (n8)--(n5);
	%-------------------------------------------------
	
	\coordinate (y1) at (3,0);
	\coordinate (y2) at (5,0); 
	\coordinate (y3) at (5,2);
	\coordinate (y4) at (3,2);
	\coordinate (y5) at (3.75,0.75);
	\coordinate (y6) at (5.75,0.75);
	\coordinate (y7) at (5.75,2.75);
	\coordinate (y8) at (3.75,2.75);
	\coordinate (y9) at (4, -1);
	
	\foreach \punto in {1,...,8}
	\fill (y\punto) circle(3pt);
	
	\node [below] at (y1) {$1$};
	\node [below] at (y2) {$2$};
	\node [above] at (y3) {$3$};
	\node [above] at (y4) {$4$};
	\node [below] at (y5) {$5$};
	\node [below] at (y6) {$6$};
	\node [above] at (y7) {$7$};
	\node [above] at (y8) {$8$};
	\node [] at (y9) {$H_2$};

	\draw[thick] (y1)--(y2);
	\draw[thick] (y1)--(y4);
	\draw[thick] (y3)--(y4);
	\draw[thick] (y3)--(y2);
	\draw[thick] (y4)--(y8);
	\draw[thick] (y6)--(y7);
	\draw[thick] (y7)--(y3);
	\draw[thick] (y2)--(y6);
	\draw[thick] (y1)--(y5);
	\draw[thick] (y5)--(y6);
	\draw[thick] (y5)--(y8);
	\draw[thick] (y7)--(y8);
	\end{tikzpicture}
	%%	\caption{$H_2$} 
	%	%\end{figure}
	%\bigskip 
	
	\caption{%Let $H_1$ and $H_2$ be the graphs considered above.
		%Then
		$
		\sigma(H_1)=\{[-1]^6, [3]^2\}
		$
		and 	$
		\sigma(H_2)=\{[-3]^{1}, [-1]^{3}, [1]^{3}, [3]^1\}
		$.}\label{fig: connectivity theorem 5.2}
\end{figure}
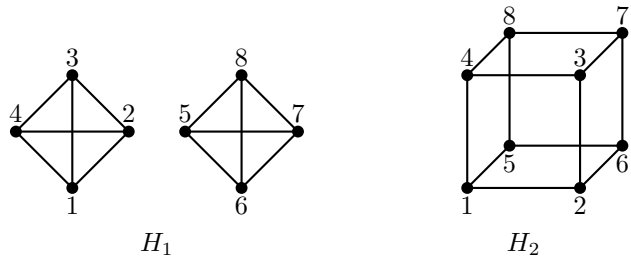

%\end{example}

\begin{proposition}\label{inertia_cond}
	Let $G, H$ be two connected  and singularly cospectral graphs with  exactly three different singular values, without considering their multiplicities,  such that $n(G)=n(H)$ or $p(G)=p(H).$ Then they are almost cospectral.	
\end{proposition}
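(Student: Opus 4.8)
The plan is to turn the conclusion into a pair of linear equations in the signed multiplicities of the eigenvalues and to solve them. Since $G$ and $H$ are singularly cospectral, they share the same three positive singular values $s_1 > s_2 > s_3$, each with a common multiplicity $m_i$. For $X \in \{G,H\}$ and $i\in\{1,2,3\}$ write $p_i^X$ and $n_i^X$ for the number of eigenvalues of $X$ equal to $+s_i$ and to $-s_i$ respectively, so that $p_i^X + n_i^X = m_i$. Almost cospectrality is then exactly the assertion that $p_i^G = p_i^H$ for $i=1,2,3$, because in that case $n_i^G = m_i - p_i^G = n_i^H$ as well.

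First I would fix the top singular value. By connectedness, the Perron--Frobenius theorem makes the largest eigenvalue of each graph simple and equal to the spectral radius, which coincides with $s_1$; hence $p_1^G = p_1^H = 1$ and $n_1^G = n_1^H = m_1 - 1$. Next I would note that the two hypotheses coincide: by Lemma \ref{prop_nec} the graphs have equal rank, so $p(G) + n(G) = p(H) + n(H)$, and therefore $p(G) = p(H)$ if and only if $n(G) = n(H)$; I may thus assume $p(G) = p(H)$. Subtracting $p_1^G = p_1^H = 1$ from $p(G) = p(H)$ leaves the first relation $(p_2^G - p_2^H) + (p_3^G - p_3^H) = 0$.

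The second relation comes from the fact that the adjacency matrix has zero diagonal, so its trace vanishes. Summing the eigenvalues of each $X$ with signs gives $\sum_{i=1}^3 s_i (p_i^X - n_i^X) = 0$. Subtracting the identity for $H$ from that for $G$ and rewriting $p_i^X - n_i^X = 2p_i^X - m_i$, the $s_1$-contribution cancels because $p_1^G = p_1^H$, leaving $s_2 (p_2^G - p_2^H) + s_3 (p_3^G - p_3^H) = 0$. Together with the first relation this is a $2\times 2$ homogeneous system in the unknowns $p_2^G - p_2^H$ and $p_3^G - p_3^H$, whose coefficient determinant equals $s_2 - s_3 \neq 0$. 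Hence both differences vanish, giving $p_i^G = p_i^H$ and $n_i^G = n_i^H$ for every $i$, which is precisely almost cospectrality.

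I do not anticipate a genuine obstacle; the only delicate points are justifying $p_1^G = p_1^H = 1$ from connectivity and observing that equal rank collapses the ``or'' in the hypothesis to a single condition. It is worth stressing where the assumption of \emph{exactly three} distinct singular values enters: once the $s_1$-term is pinned down there remain precisely two sign-difference unknowns, matched by the two available linear relations (one from the trace, one from the inertia hypothesis). With four or more distinct singular values the analogous system would be underdetermined and the argument would break down.
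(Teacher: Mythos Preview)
Your proof is correct and follows essentially the same route as the paper: use connectedness to pin down the multiplicity of $+s_1$, then combine the trace condition with the inertia hypothesis to obtain a nonsingular $2\times 2$ linear system in the remaining sign-multiplicity differences. Your write-up is in fact slightly more streamlined than the paper's, which splits off the case $m_1\ge 2$ and invokes Theorem~\ref{mayoroiguala2} separately, whereas you observe that $p_1^G=p_1^H=1$ (and hence $n_1^G=n_1^H=m_1-1$) holds for any $m_1$ and run the linear-algebra argument uniformly; your remark that equal rank makes the two inertia hypotheses equivalent is also a small clarification the paper leaves implicit.
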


\begin{proof}
	Let $s_1> s_2> s_3$  be the non-zero  singular values of $G$ with multiplicity $m_1,m_2, m_3$, respectively. 	If we denote by $\lambda_1(G)$ and $\mu_1(H)$ the largest eigenvalue of $G$ and $H$, respectively, then $s_1=\lambda_1(G)=\mu_1(H)$. On the other hand, by the connectivity of both graphs such eigenvalues are simple.

	The non-zero eigenvalues of $G$ and $H$ are include in the set 
	$$
	\{\pm s_1, \pm s_2, \pm s_3\}.
	$$
	Even more, for any  $i\in \{1, 2, 3\}$,   the non-zero eigenvalues of $G$ are exactly $s_i$ and $-s_i$ with multiplicity  $m_i^+$ and $m_i^-$ respectively.  Analogously, the non-zero eigenvalues of  $H$ are $s_i$ and $-s_i$ with multiplicity    $\hat{m}_i^+$ and $\hat{m}_i^-$, respectively. Obviously, it holds that 
	$m_i^++m_i^-=m_i=\hat{m}_i^++\hat{m}_i^-$ with 
	$m_i^+, m_i^-, \hat{m}_i^+, \hat{m}_i^- \geq 0.$

	This gives two cases to consider:
	\begin{enumerate}
		\item {\bf Case 1:}  $m_1\geq 2$.\\
		From Proposition \ref{mayoroiguala2} we have that $G$ and $H$ are almost cospectral.
		
		\item {\bf Case 2:} $m_1=1.$

		As  $m_1=1$  thus $m_i^+=\hat{m}_i^+=1$ and  $m_i^-=\hat{m}_i^-=0.$ Using the well-known fact that the sum of all eigenvalues of a graph is always zero, we have the following equalities
		
		\begin{equation}\label{sistema_s}
		\left\{ \begin{array}{lcc}
		s_1+(m_2^+-m_2^-)s_2+(m_3^+-m_3^-)s_3=0. \\
		\\ 
		s_1+(\hat{m}_2^+-\hat{m}_2^-)s_2+(\hat{m}_3^+-\hat{m}_3^-)s_3=0.
		\end{array}
		\right.
		\end{equation}
		
		From the hypothesis about the positive or negative inertia one of the following identity holds
		\begin{equation}\label{positivo}
		1+m_2^++m_3^+=1+\hat{m}_2^++\hat{m}_3^+, 
		\end{equation}
		or 
		\begin{equation}\label{negativo}
		m_2^-+m_3^-=\hat{m}_2^-+\hat{m}_3^-. 
		\end{equation}
		
		From now on, without loss of generality, we assume that $p(G)=p(H)$. Then, by  \eqref{sistema_s} and  \eqref{positivo}, we have that 
		$$
		(m_2^+-\hat{m}_2^+)(s_2-s_3)=0.
		$$
		As $s_2-s_3>0$, then we conclude that $m_2^+-\hat{m}_2^+=0=m_3^+-\hat{m}_3^+$. This shows that $G$ and $H$ are almost cospectal graphs and it concludes the proof.
	\end{enumerate}

\end{proof}

%\begin{example}
\begin{remark}
	The condition about the negative or positive inertia cannot be dropped from Proposition~\ref{inertia_cond}, as can
	be seen in Fig.~\ref{fig: connectivity theorem 5.3}.%by considering the next pair of graphs. Let 
\end{remark}

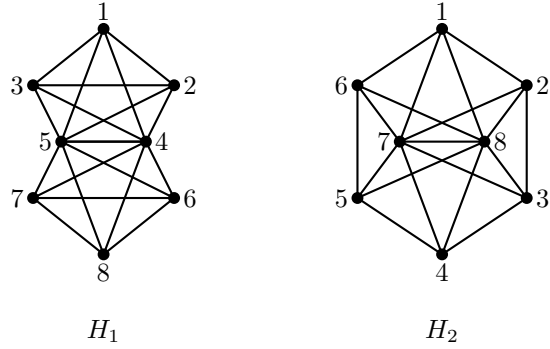
\begin{figure}
	\centering
	\begin{tikzpicture}[scale=0.75, every node/.style={scale=1}]
	
	\coordinate (w5) at (-0.75,0);
	\coordinate (w4) at (0.75,0);
	\coordinate (w2) at (1.25,1); 
	\coordinate (w3) at (-1.25,1); 
	\coordinate (w1) at (0,2);
	\coordinate (w6) at (1.25,-1);
	\coordinate (w7) at (-1.25,-1);
	\coordinate (w8) at (0,-2);

	\foreach \punto in {1,...,8}
	\fill (w\punto) circle(3pt);
	
	\node [above] at (w1) {$1$};
	\node [left] at (w3) {$3$};
	\node [right] at (w2) {$2$};
	\node [right] at (w4) {$4$};
	\node [left] at (w5) {$5$};
	\node [right] at (w6) {$6$};
	\node [left] at (w7) {$7$};
	\node [below] at (w8) {$8$};
	
	\node [below] at (0,-3) {$H_1$};
	
	\foreach \j in {1,...,5}
	\foreach \k in {\j,...,5}
	\draw[thick] (w\j)--(w\k);
	
	\foreach \j in {4,...,8}
	\foreach \k in {\j,...,8}
	\draw[thick] (w\j)--(w\k);
	
	\coordinate (y1) at (6,2);
	\coordinate (y2) at (7.50,1); 
	\coordinate (y3) at (7.50,-1);
	\coordinate (y4) at (6,-2);
	\coordinate (y5) at (4.50,-1);
	\coordinate (y6) at (4.50,1); 
	\coordinate (y7) at (5.25,0);
	\coordinate (y8) at (6.75,0);

	\foreach \punto in {1,...,8}
	\fill (y\punto) circle(3pt);
	
	\node [above] at (y1) {$1$};
	\node [right] at (y3) {$3$};
	\node [right] at (y2) {$2$};
	\node [below] at (y4) {$4$};
	\node [left] at (y5) {$5$};
	\node [left] at (y6) {$6$};
	\node [left] at (y7) {$7$};
	\node [right] at (y8) {$8$};
	\node [below] at (6,-3) {$H_2$};
	
	\draw[thick] (y1)--(y2);
	\draw[thick] (y2)--(y3);
	\draw[thick] (y3)--(y4);
	\draw[thick] (y4)--(y5);
	\draw[thick] (y5)--(y6);
	\draw[thick] (y6)--(y1);
	
	\draw[thick] (y7)--(y8);
	
	\foreach \j in {1,...,6}
	{
		\draw[thick] (y\j)--(y7);
		\draw[thick] (y\j)--(y8);
	}
	
	\end{tikzpicture}
	\caption{$
		\sigma(H_1)=\{[-2]^{1}, [-1]^{5}, [2]^{1}, [5]^1\}
		$
		and
		$
		\sigma(H_2)=\{[-2]^2, [-1]^3, [1]^2, [5]^1\}.
		$}\label{fig: connectivity theorem 5.3}
\end{figure}

%then it is easy to check  that $H_1, H_2$ are connected,  and it using SageMath Software obtain that they are
%singularly cospectral graphs with  exactly three different singular values (without count its multiplicities) such that $p(H_1)\neq p(H_2)$ and $n(H_1)\neq n(H_2)$. Since

%  However this  pair of graphs are not (almost) cospectral.
%\end{example}
As a consequence of Proposition~\ref{inertia_cond} the following result holds.
\begin{theorem}\label{thm: inertia}
	Let $G, H$ be two connected  and singularly cospectral graphs with  exactly three different singular values, without count its multiplicities,  such that $In(G)=In(H)$.  Then, $G$ and $H$ are almost cospectral.		
\end{theorem}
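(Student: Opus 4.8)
The plan is to reduce Theorem~\ref{thm: inertia} directly to Proposition~\ref{inertia_cond}. The hypotheses are nearly identical: both concern two connected singularly cospectral graphs $G$ and $H$ with exactly three distinct singular values. The only difference is that Theorem~\ref{thm: inertia} assumes the full inertia equality $In(G)=In(H)$, whereas Proposition~\ref{inertia_cond} requires merely one of the two weaker conditions $n(G)=n(H)$ or $p(G)=p(H)$. So the entire argument amounts to observing that $In(G)=In(H)$ implies (in fact is much stronger than) either of these two equalities.

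The first step is to recall the definition of inertia: $In(G)=(p(G),z(G),n(G))$, recording the number of positive, zero, and negative eigenvalues of $G$. Hence the assumption $In(G)=In(H)$ means precisely that $p(G)=p(H)$, $z(G)=z(H)$, and $n(G)=n(H)$ all hold simultaneously. In particular, $p(G)=p(H)$ holds, which is exactly one of the two alternative hypotheses demanded by Proposition~\ref{inertia_cond}.

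With this observation in hand, I would simply invoke Proposition~\ref{inertia_cond}: since $G$ and $H$ are connected, singularly cospectral, have exactly three distinct singular values, and satisfy $p(G)=p(H)$ (equivalently $n(G)=n(H)$), that proposition yields immediately that $G$ and $H$ are almost cospectral. This completes the proof.

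There is essentially no obstacle here; the theorem is a clean corollary, and indeed the paper itself flags it as such (\textbf{``As a consequence of Proposition~\ref{inertia_cond}''}). The only point worth a sentence of care is making explicit that $In(G)=In(H)$ is a genuinely stronger hypothesis, so no new case analysis is needed beyond what Proposition~\ref{inertia_cond} already covers; all the real work—the splitting into the cases $m_1\geq 2$ and $m_1=1$, and the linear-algebraic deduction from the trace-zero identity in~\eqref{sistema_s}—was done there.
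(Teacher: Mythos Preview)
Your proposal is correct and matches the paper's approach exactly: the paper states Theorem~\ref{thm: inertia} as an immediate consequence of Proposition~\ref{inertia_cond}, with no further argument given. Your only added content is making explicit that $In(G)=In(H)$ entails $p(G)=p(H)$ (and $n(G)=n(H)$), which is precisely the observation needed to invoke the proposition.
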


In the above Theorem almost cospectral can be replaced by cospectral because $|V(G)|=|V(H)|$.

\end{document}